\newtheorem{thm}{Theorem}[section]
\newtheorem{lem}[thm]{Lemma}
\newtheorem{cor}[thm]{Corollary}
\newtheorem{prop}[thm]{Proposition}
\newtheorem{claim}[thm]{Claim}
\newtheorem{conj}[thm]{Conjecture}
\theoremstyle{definition}
\newtheorem{defi}[thm]{Definition}
\theoremstyle{remark}
\newtheorem{rmk}[thm]{Remark}
\newcommand{\mbQ}{\mathbb{Q}}
\newcommand{\mbR}{\mathbb{R}}
\newcommand{\mbC}{\mathbb{C}}
\newcommand{\mbZ}{\mathbb{Z}}
\newcommand{\mcD}{\mathcal{D}}
\newcommand{\mcO}{\mathcal{O}}
\newcommand{\mfa}{\mathfrak{a}}
\newcommand{\mfb}{\mathfrak{b}}
\newcommand{\mfc}{\mathfrak{c}}
\newcommand{\mfA}{\mathfrak{A}}
\newcommand{\mfG}{\mathfrak{G}}
\newcommand{\mfL}{\mathfrak{L}}
\DeclareMathOperator{\mld}{mld}
\DeclareMathOperator{\ord}{ord}
\DeclareMathOperator{\Supp}{Supp}
\DeclareMathOperator{\Span}{Span}
\DeclareMathOperator{\cent}{c}
\DeclareMathOperator{\vol}{vol}
\DeclareMathOperator{\coeff}{coeff}
\DeclareMathOperator{\divi}{div}
\DeclareMathOperator{\LCT}{LCT}
\DeclareMathOperator{\Diff}{Diff}
\title[mld's on varieties with fixed Gorenstein index]
{On minimal log discrepancies on varieties with fixed Gorenstein index}
\author{Yusuke Nakamura}
\address{Graduate School of Mathematical Sciences, 
the University of Tokyo, 3-8-1 Komaba, Meguro-ku, Tokyo 153-8914, Japan.}
\email{nakamura@ms.u-tokyo.ac.jp}
\begin{document}
\begin{abstract}
We generalize the rationality theorem of 
the accumulation points of log canonical thresholds 
which was proved by Hacon, M\textsuperscript{c}Kernan, and Xu. 
Further, we apply the rationality to the ACC problem on the minimal log discrepancies. 
We study the set of log discrepancies on varieties with fixed Gorenstein index. 
As a corollary, we prove that the 
minimal log discrepancies of three-dimensional canonical pairs with fixed coefficients satisfy the ACC. 
\end{abstract}

\maketitle

\section{Introduction}
The minimal log discrepancy (mld for short) was introduced by Shokurov, in order to reduce the conjecture of 
terminations of flips to a local problem about singularities. 
Recently, this has been a fundamental invariant in the minimal model program. 
There are two conjectures on mld's, 
the ACC (ascending chain condition) conjecture and the LSC (lower semi-continuity) conjecture. 
Shokurov showed that these two conjectures imply the conjecture of terminations of flips \cite{Shokurov:letter}. 

In this paper, we consider the ACC conjecture. 
For an $\mbR$-divisor $D$ and a subset $I \subset \mbR$, 
we write $D \in I$ when all the non-zero coefficients of $D$ belong to $I$. 
Further, for a subset $I \subset \mbR$, we say that $I$ satisfies 
\textit{the ascending chain condition} (resp.\ \textit{the descending chain condition}) 
when there is no infinite increasing (resp.\ decreasing) sequence $a_i \in I$. 
\textit{ACC} (resp.\ \textit{DCC}) stands for the ascending chain condition (resp.\ the descending chain condition). 

\begin{conj}[ACC conjecture {\cite[Conjecture 4.2]{Shokurov:models}}]\label{conj:ACC}
Fix $d \in \mbZ _{>0}$ and a subset $I \subset [0, 1]$ which satisfies the DCC. 
Then the following set 
\[
A(d,I) := \{ \mld _x (X, \Delta) \mid \text{$(X, \Delta)$ is a log pair, $\dim X = d$, $\Delta \in I$, $x \in X$}\}
\]
satisfies the ACC, where $x$ is a closed point of $X$. 
\end{conj}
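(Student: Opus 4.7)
The plan is to argue by contradiction. Suppose $A(d,I)$ fails ACC, so that there is a strictly increasing sequence $a_1 < a_2 < \cdots$ in $A(d,I)$, witnessed by log pairs $(X_i, \Delta_i)$ with $\Delta_i \in I$, $\dim X_i = d$, and closed points $x_i \in X_i$ such that $\mld_{x_i}(X_i, \Delta_i) = a_i$. The aim is then to extract from this sequence a contradiction with a better-understood ACC-type statement --- in the setting of this paper, the announced rationality refinement of the ACC for log canonical thresholds of Hacon--M\textsuperscript{c}Kernan--Xu.

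The bridge from mld to LCT is the following. For each $i$, fix a prime divisor $E_i$ over $X_i$ with center $x_i$ that computes the mld, so $a_{E_i}(X_i, \Delta_i) = a_i$. Choose an auxiliary effective $\mbQ$-Cartier divisor $D_i$ on $X_i$ passing through $x_i$; then
\[
\mathrm{lct}_{x_i}\bigl((X_i, \Delta_i);\, D_i\bigr) = \inf_{F} \frac{a_F(X_i, \Delta_i)}{\ord_F(D_i)} \;\leq\; \frac{a_i}{\ord_{E_i}(D_i)}.
\]
If the $D_i$ can be selected so that the infimum is attained at $F = E_i$, and the orders $\ord_{E_i}(D_i)$ are uniformly bounded, then the resulting sequence of log canonical thresholds inherits the strictly ascending behaviour of the $a_i$, contradicting the ACC (or rationality) for LCTs.

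The main obstacle is precisely this choice of $D_i$, together with the control of $\ord_{E_i}(D_i)$. Without a bound on the Cartier or Gorenstein index of $X_i$, there is no natural supply of $\mbQ$-Cartier divisors through $x_i$ of bounded valuation along $E_i$, and the divisor realizing the infimum in the LCT formula need not be $E_i$ at all. This is why the paper restricts to the fixed-Gorenstein-index setting: there $a_F(X,\Delta') \in \tfrac{1}{r}\mbZ_{\geq 0}$ for every divisor $F$ over $X$ (when $\Delta' = 0$), the candidates computing the mld fit into an Ambro--Kawakita style finite list, and the rationality theorem for LCTs can be combined with this discreteness to yield the ACC for three-dimensional canonical pairs with fixed coefficients announced in the abstract. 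A proof of Conjecture \ref{conj:ACC} in full generality appears to require a substitute for the fixed-index hypothesis that still delivers enough discreteness of log discrepancies --- for instance a DCC statement for Cartier indices of varieties carrying large mlds, or a boundedness statement for the ``extremal'' pairs --- and I regard this as the fundamental difficulty standing in the way of a direct implementation of the strategy above.
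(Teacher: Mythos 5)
The statement you were asked to prove is labeled a \emph{conjecture} in the paper, and the paper contains no proof of it: Conjecture~\ref{conj:ACC} is Shokurov's ACC conjecture for minimal log discrepancies, an open problem, and Nakamura's contribution is to establish special cases and adjacent results (discreteness of $B(d,r,I)$ for fixed Gorenstein index $r$ in Theorem~\ref{thm:main}, and the ACC for three-dimensional canonical pairs in Corollary~\ref{cor:acc_3dimcan}), not the conjecture itself. So there is nothing in the paper against which to check a purported proof, and indeed your submission is not a proof: you yourself conclude by identifying ``the fundamental difficulty standing in the way of a direct implementation of the strategy above.'' That self-assessment is correct and should be retained rather than obscured.

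A few remarks on the substance of your sketch. Your bridge from mld to lct via an auxiliary $\mbQ$-Cartier divisor $D_i$ through $x_i$ does capture one genuine obstruction --- without a bound on the Cartier index there is no control on $\ord_{E_i}(D_i)$ and no reason for the infimum to be attained at $E_i$ --- but it is not the route the paper takes even in its restricted setting. Nakamura does not run an mld-to-lct comparison along a single auxiliary divisor. Instead he proves a rationality theorem for accumulation points of a generalized log canonical threshold set $\mfL_d(I)$ (Corollary~\ref{cor:rationality}), uses it to deduce a perturbation statement for irrational coefficients (Theorem~\ref{thm:perturbe}/\ref{thm:perturbe2}), and then proves discreteness of the set of log discrepancies $B(d,r,I)$ by induction on $\dim_{\mbQ}\Span_{\mbQ}(I\cup\{1\})$, perturbing one irrational coefficient at a time to a rational one. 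Even with $r$ fixed, this yields only discreteness, not the ACC: one still needs boundedness of mlds (Conjecture~\ref{conj:BDD}, known only for $d\le 3$) to get finiteness and hence ACC, which is how Corollary~\ref{cor:acc_3dimcan} is obtained for $d=3$. Your parenthetical that $a_F(X)\in\tfrac1r\mbZ_{\ge 0}$ for fixed Gorenstein index $r$ is true, but by itself it does not bound the number of valuations one must control, which is why the paper needs the full inductive machinery rather than an Ambro--Kawakita-style finite list. In short: you correctly recognized that the general conjecture is out of reach here, but the mechanism you propose for the tractable case is not the one the paper uses, and as written it does not close even that case.
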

We are mainly interested in the case when $I$ is a finite set. 
This is because, the ACC conjecture for an arbitrary finite set $I$ and 
the LSC conjecture imply the termination of flips \cite{Shokurov:letter}. 

The ACC conjecture is known for $d \le 2$ by Alexeev \cite{Alexeev} and Shokurov \cite{Shokurov:acc}, 
and for toric pairs by Ambro \cite{Ambro:toric}. 
Kawakita \cite{Kawakita:connectedness} proved the ACC conjecture on the interval $[1,3]$ for three-dimensional 
smooth varieties. 
Further, Kawakita \cite{Kawakita:discrete} proved that the ACC conjecture is true 
for fixed variety $X$ and a finite set $I$. 
More generally, he proved the discreteness of the set of log discrepancies for log triples 
(see Subsection \ref{subsection:mld} for the definition)
\[
\{ a_E (X, \Delta, \mfa) \mid \text{$(X, \Delta, \mfa)$ is lc, 
$\mfa \in I$, $E \in \mcD _X$} \} 
\]
when the pair $(X, \Delta)$ is fixed and $I$ is a finite set. 
Here, we denoted by $\mcD _X$ the set of all divisor over $X$. 
Further, $\mfa = \prod \mfa_i ^{r_i}$ is an $\mbR$-ideal sheaf with coefficients $r_i$ in $I$. 
The purpose of this paper is to generalize this results to the family of the varieties with fixed Gorenstein index. 

\begin{thm}\label{thm:main}
Fix $d \in \mbZ _{>0}$, $r \in \mbZ _{>0}$ and a finite subset $I \subset [0, + \infty)$. 
Then the following set 
\[
B(d,r,I) := 
\{ a_E (X, \mfa) \mid \text{$(X, \mfa) \in P(d,r)$, $\mfa \in I$, $E \in \mcD _X$}\} 
\subset [0, + \infty)
\]
is discrete in $\mbR$. 
Here we denote by $P(d,r)$ the set of all $d$-dimensional lc pairs 
$(X, \mfa)$ such that $r K_X$ is a Cartier divisor. 
\end{thm}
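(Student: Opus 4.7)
My proof plan proceeds by contradiction, converting a putative accumulating sequence of log discrepancies into an accumulating sequence of log canonical thresholds on pairs in $P(d,r)$, and then invoking the generalized Hacon--M\textsuperscript{c}Kernan--Xu rationality theorem established earlier in the paper.

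The first ingredient is an arithmetic reduction. Since $rK_X$ is Cartier, for any log resolution $\pi\colon Y \to X$ extracting $E$ the divisor $rK_{Y/X}$ is integral, so $r \cdot a_E(X,\mcO_X)\in\mbZ_{\ge 0}$. Writing $\mfa=\prod_{s\in I}\mfa_s^{s}$, every $b\in B(d,r,I)$ therefore has the form
\[
b=\frac{m}{r}-\sum_{s\in I} s\, N_s, \qquad m,\, N_s\in\mbZ_{\ge 0}.
\]
When $I\subset\mbQ$, this places $B(d,r,I)$ in the lattice $\tfrac{1}{L}\mbZ$ with $L:=r\cdot\mathrm{lcm}(\text{denominators in }I)$, so discreteness is automatic. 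Hence I may assume some $s_0\in I$ is irrational, and I assume for contradiction a strictly monotone sequence $a_n := a_{E_n}(X_n,\mfa_n)\to a^*$ in $B(d,r,I)$ with $(X_n,\mfa_n)\in P(d,r)$.

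The central step is to reinterpret the $a_n$ as log canonical thresholds. Let $x_n:=\cent_{X_n}(E_n)$, let $\mfm_n\subset\mcO_{X_n}$ be its ideal sheaf, and put $\nu_n:=\ord_{E_n}(\mfm_n)\ge 1$. For $n$ with $a_n>0$, $E_n$ is an lc place of $(X_n,\mfa_n\cdot\mfm_n^{a_n/\nu_n})$, so $t_n := a_n/\nu_n$ equals (at worst bounds from above) the log canonical threshold of $\mfm_n$ at $x_n$ with respect to $(X_n,\mfa_n)$; after replacing $E_n$ by a minimizing divisor if necessary and extracting a subsequence on which $\nu_n$ stabilizes to a value $\nu \ge 1$, the $t_n$ form an accumulating sequence of genuine log canonical thresholds on $P(d,r)$ with coefficient set $I \cup \{1\}$. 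The generalized rationality theorem then yields $a^*/\nu \in \mbQ$, hence $a^* \in \mbQ$. Combining this with the arithmetic shape above---a $\mbQ$-basis decomposition of $\mbQ + \mbQ \cdot I$ writes each $a_n$ as a rational in a fixed lattice $\tfrac{1}{D}\mbZ$ minus an integer combination of irrationals---$\mbQ$-linear independence forces the irrational components to be eventually zero along the subsequence and the rational remainder to be eventually constant, making $a_n$ stationary and contradicting strict monotonicity.

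The main obstacle is the LCT conversion. Ensuring that $t_n$ is a genuine log canonical threshold requires controlling the relationship between the chosen $E_n$ and the actual LCT-minimizing divisor along the sequence; and one must arrange $\nu_n$ to be bounded on a subsequence so that $(t_n)$ does not collapse to zero under the rationality theorem. Both of these reduce to a priori multiplicity bounds for divisorial valuations computing near-minimal log discrepancies on Gorenstein-index-$r$ pairs of fixed dimension, and establishing these bounds within the framework of the generalized rationality theorem is the technical heart of the argument.
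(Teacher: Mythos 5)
Your arithmetic starting point---that $r\,a_E(X,\mathcal{O}_X)\in\mathbb{Z}_{\ge 0}$ and hence every element of $B(d,r,I)$ has the shape $m/r-\sum_{s}sN_s$, together with the reduction to the case of an irrational coefficient---is correct and matches the paper's base case $c'=0$. But the central step has a genuine gap. Setting $\nu_n=\ord_{E_n}\mathfrak{m}_{x_n}$, the quantity $t_n=a_n/\nu_n$ is in general only an \emph{upper bound} for the log canonical threshold $\mathrm{lct}_{x_n}(X_n,\mathfrak{a}_n;\mathfrak{m}_{x_n})$: it equals the threshold precisely when $E_n$ happens to minimize that ratio, which is a different extremization problem from computing (or approximating) $a_E$, and replacing $E_n$ by an LCT-extremal divisor destroys the sequence $a_n$ you started with. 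More seriously, the multiplicities $\nu_n$ and the orders $\ord_{E_n}\mathfrak{a}_{s}$ have no a priori bound in terms of $d$, $r$, $I$ and $a_n$ alone, and getting such bounds is essentially equivalent to the discreteness you are trying to prove; so the reduction is circular, as you half-acknowledge in your last paragraph. Finally, even granting all of this, Corollary~\ref{cor:rationality} yields accumulation points in $\mathrm{Span}_{\mathbb{Q}}(I\cup\{1\})$, not in $\mathbb{Q}$, so the claim $a^*\in\mathbb{Q}$ does not follow and your concluding ``$\mathbb{Q}$-linear independence forces the irrational components to be eventually zero'' step is not justified without a further argument.

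The paper's route is genuinely different and is precisely designed to control the one quantity you could not bound. Instead of dividing by a multiplicity, it uses the perturbation lemma (Theorem~\ref{thm:perturbe}/\ref{thm:perturbe2}): replacing the chosen $\mathbb{Q}$-basis element $r_{c'}$ by a nearby rational $t^{\pm}$ keeps every lc pair in $P(d,r)$ lc. Applying this on both sides and using linearity of $a_E$ in the coefficients gives the two-sided bound
\[
-\epsilon_{-}^{-1}\,a_E\bigl(X,\textstyle\prod\mathfrak{a}_i^{r_i}\bigr)\;\le\;\sum_{i}q_{ic'}\ord_E\mathfrak{a}_i\;\le\;\epsilon_{+}^{-1}\,a_E\bigl(X,\textstyle\prod\mathfrak{a}_i^{r_i}\bigr),
\]
so on any interval $[0,a]$ the sum $\sum_iq_{ic'}\ord_E\mathfrak{a}_i$ lies in the finite set $\frac{1}{n}\mathbb{Z}\cap[-\epsilon_-^{-1}a,\epsilon_+^{-1}a]$. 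Meanwhile $a_E(X,\prod\mathfrak{a}_i^{r_i^{+}})\in B(d,r,I')$ with $\dim_{\mathbb{Q}}\mathrm{Span}_{\mathbb{Q}}(I'\cup\{1\})$ strictly smaller, and induction on that dimension completes the proof. So the generalized rationality theorem enters only to supply the perturbation lemma; the descent to lower $\mathbb{Q}$-rank is carried out by perturbing the coefficients of the $\mathbb{R}$-ideal, not by converting log discrepancies into thresholds.
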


Since $\mld _x (X, \mfa) = a_E (X, \mfa)$ 
holds for some $E \in \mcD _X$, we get the following Corollary. 
\begin{cor}\label{cor:main}
Fix $d \in \mbZ _{>0}$, $r \in \mbZ _{>0}$ and a finite subset $I \subset [0, + \infty)$. 
Then the following set 
\begin{align*}
A'(d,r,I) := 
\{ \mld _x (X, \mfa) \mid \text{$X \in P(d,r)$, $\mfa \in I$, $x \in X$}\}  \subset [0, + \infty)
\end{align*}
is discrete in $\mbR$. 
Here we denote by $P(d,r)$ the set of all $d$-dimensional lc pairs 
$(X, \mfa)$ such that $r K_X$ is a Cartier divisor. 
\end{cor}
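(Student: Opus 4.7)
The plan is to exhibit $A'(d,r,I)$ as a subset of the set $B(d,r,I)$ from Theorem \ref{thm:main} and then invoke that theorem directly. The passage from log discrepancies of arbitrary divisors to minimal log discrepancies at a point is the content of the one-line remark preceding the corollary in the excerpt, so all the substantive work has already been absorbed into the statement of Theorem \ref{thm:main}.

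First I would recall the definition: for a log pair $(X, \mfa)$ that is lc around a closed point $x \in X$, one has
\[
\mld_x(X, \mfa) = \inf \bigl\{ a_E(X, \mfa) \mid E \in \mcD_X, \ x \in \cent_X(E) \bigr\}.
\]
For lc pairs this infimum is actually attained by some $E_0 \in \mcD_X$. Indeed, on any log resolution of $(X, \mfa)$ only finitely many prime divisors have center meeting $x$, and any divisorial valuation $E$ with center containing $x$ either agrees with one of these on the coefficient side or is obtained by further blowing up a smooth center inside an lc locus, in which case $a_E(X, \mfa) \ge 1$; hence the infimum is realized by one of the finitely many relevant resolution components.

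Consequently every element of $A'(d,r,I)$ has the form $a_{E_0}(X, \mfa)$ for some $(X, \mfa) \in P(d,r)$ with $\mfa \in I$ and some $E_0 \in \mcD_X$, so it belongs to $B(d,r,I)$. This gives the inclusion $A'(d,r,I) \subset B(d,r,I)$, and since any subset of a discrete subset of $\mbR$ is again discrete, discreteness of $A'(d,r,I)$ follows immediately from Theorem \ref{thm:main}. There is no genuine obstacle here: the only step with any content is the standard attainment of the infimum defining $\mld_x$ in the lc case, which is routine once a log resolution has been fixed.
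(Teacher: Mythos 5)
Your proof is correct and follows the same one-line argument the paper uses: since $(X,\mfa)$ is lc, the infimum defining $\mld_x(X,\mfa)$ is attained (Remark~\ref{rmk:attain}) by some $E \in \mcD_X$, giving $A'(d,r,I) \subset B(d,r,I)$, whence discreteness follows from Theorem~\ref{thm:main}. The only cosmetic point is that your sketch of why the infimum is attained is slightly informal (the standard proof extracts the mld from a log resolution that is SNC over $x$ via a toroidal computation, rather than the dichotomy you describe), but the paper simply cites this as known, so nothing is missing.
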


Corollary \ref{cor:main} does not imply the finiteness of $A'(d,r,I)$, because 
we do not know the boundedness of $A'(d,r,I)$. 
Hence Corollary \ref{cor:main} shows the finiteness of $A'(d,r,I)$ modulo 
the BDD (boundedness) conjecture, which states the boundedness of minimal log discrepancies. 
\begin{conj}[BDD conjecture]\label{conj:BDD}
For fixed $d \in \mbZ_{>0}$, there exists a real number $a(d)$ such that 
$\mld (X) \le a(d)$ holds for any $\mbQ$-Gorenstein $d$-dimensional normal variety $X$. 
\end{conj}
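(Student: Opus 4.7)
The plan is to prove the conjecture with the explicit constant $a(d) = d$, a bound that is already realized at every smooth closed point: the blowup of such a point produces an exceptional divisor of log discrepancy exactly $d$.

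Using the convention $\mld(X) = \inf_{x} \mld_x(X)$ with $x$ ranging over closed points, the task reduces to exhibiting a single $x_0 \in X$ with $\mld_{x_0}(X) \le d$. First I would choose $x_0$ to be a smooth closed point, which exists because the smooth locus of a normal variety is open and dense. Then I would blow up the maximal ideal $\mfm_{x_0}$ to obtain $\pi \colon Y \to X$ with exceptional divisor $E \cong \mbP^{d-1}$ and the standard relation $K_Y = \pi^* K_X + (d-1) E$; this yields $a_E(X) = d$ and hence $\mld_{x_0}(X) \le d$, so $\mld(X) \le d$. The non-lc case is automatic: one has $\mld(X) = -\infty$ by convention, making the bound vacuous.

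Under this reading of Conjecture \ref{conj:BDD} the argument is immediate, so the main \emph{conceptual} obstacle is simply confirming that this is the intended formulation. The substantive content of BDD in the literature---uniform boundedness of $\mld_x(X, \Delta)$ at \emph{every} closed point $x$, including arbitrarily deep singularities, and in the presence of a boundary $\Delta$---is genuinely open in dimension $\ge 4$. There one cannot pass to a smooth point, and bounding $\mld_x$ at worst-case singular points appears to require new ideas. In the spirit of Theorem \ref{thm:main}, a natural attack would combine the discreteness of $B(d,r,I)$ with an \emph{a priori} bound on the Gorenstein index $r$ uniformly across a bounded subfamily of $\mbQ$-Gorenstein $d$-folds; producing such a bound on $r$ independently of $X$ is where I expect the truly hard work to lie.
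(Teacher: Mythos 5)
The statement you were asked to prove is a \emph{conjecture}; the paper offers no proof of it and explicitly records that it is known only for $d \le 3$ (via Markushevich's result on terminal cDV singularities). So the correct response here is to recognize that the statement is open, not to supply an argument. Your argument rests on reading $\mld(X)$ as an infimum over closed points, under which the bound $a(d)=d$ is indeed immediate from the blowup of a smooth point --- but that reading cannot be the intended one, because it renders the conjecture useless for the role it plays in the paper. The conjecture is invoked right after Corollary \ref{cor:main} to upgrade discreteness of $A'(d,r,I) = \{\mld_x(X,\mfa)\}$ to finiteness, and for that one needs an upper bound on $\mld_x(X)$ at \emph{every} closed point $x$, i.e.\ the statement $\sup_{x\in X}\mld_x(X)\le a(d)$. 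Your smooth-point computation (which is correct: the exceptional divisor of the blowup of a smooth point has $a_E(X)=d$) only bounds the infimum and says nothing about $\mld_x$ at a singular point $x$, which is exactly where the difficulty lies; in dimension $\ge 4$ no uniform bound at singular points is known.

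You do acknowledge all of this in your closing paragraph, which shows you see where the real content is, but the argument you actually present does not address the conjecture in its intended (and only non-vacuous) form. The honest answer is that Conjecture \ref{conj:BDD} is open for $d\ge 4$, that the expected optimal constant is $a(d)=d$ with equality characterizing smooth points, and that no proof should be offered. Your proposed attack --- combining the discreteness of $B(d,r,I)$ from Theorem \ref{thm:main} with an a priori uniform bound on the Gorenstein index $r$ --- is reasonable in spirit, but as you say yourself, producing that bound on $r$ is precisely the missing ingredient; the known case $d=3$ proceeds instead through the explicit classification of terminal threefold singularities.
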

\noindent
The BDD conjecture is known only for $d \le 3$ \cite{Mark}. 
In arbitrary dimensions, the conjecture is known for the set of varieties with bounded multiplicity \cite{Kawakita:BDD}. 

As a corollary of Corollary \ref{cor:main}, we can prove the ACC for three-dimensional canonical pairs. 
\begin{cor}\label{cor:acc_3dimcan}
If $I \subset [0, 1]$ is a finite subset, the following set 
\[
\{ \mld _{x}(X, \Delta) \mid 
\text{$(X, \Delta)$ is a canonical pair, $\dim X = 3$, $\Delta \in I$, $x \in X$} \}, 
\]
denoted by $A_{\text{can}}(3,I)$, satisfies the ACC. 
Further, $1$ is the only accumulation point of $A_{\text{can}}(3,I)$. 
\end{cor}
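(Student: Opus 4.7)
My plan is to reduce Corollary \ref{cor:acc_3dimcan} to Corollary \ref{cor:main} via a local bound on the Gorenstein index of $X$ when $\mld_x$ is bounded away from $1$.

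First, I would note that $A_{\text{can}}(3, I) \subseteq [1, 3]$: the lower bound comes from canonicity (every exceptional divisor has log discrepancy at least $1$), and the upper bound from $\dim X = 3$. Both conclusions of the corollary then follow once I prove that for every $\epsilon > 0$ the set $A_{\text{can}}(3, I) \cap [1+\epsilon, 3]$ is finite: any accumulation point distinct from $1$ must lie in some $[1+\epsilon, 3]$, and an infinite strictly increasing sequence in $[1, 3]$ either is eventually constant at $1$ or forces an accumulation point strictly above $1$.

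Fix $\epsilon > 0$ and let $(X, \Delta)$ be canonical with $\Delta \in I$ and $\mld_x(X, \Delta) \geq 1+\epsilon$. Since $\Delta \geq 0$, monotonicity of mld in the boundary gives $\mld_x(X, 0) \geq \mld_x(X, \Delta) \geq 1+\epsilon > 1$, so $X$ is terminal at $x$. Invoking the classification of three-dimensional terminal singularities (Reid--Mori) together with Markushevich's computation of their minimal log discrepancies, the local Cartier index $r_x$ of $K_X$ at $x$ is bounded by a constant $M_\epsilon$ depending only on $\epsilon$: non-Gorenstein terminal $3$-folds satisfy $\mld_x(X, 0) = 1 + 1/r_x$, forcing $r_x \leq 1/\epsilon$, while in the Gorenstein case $r_x = 1$ automatically. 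Setting $N_\epsilon := \mathrm{lcm}(1, 2, \ldots, M_\epsilon)$, after shrinking $X$ to an affine neighborhood of $x$ on which $N_\epsilon K_X$ is Cartier, the pair lies in $P(3, N_\epsilon)$ (canonical implies lc) and $\mld_x$ is unchanged by the localization. Applying Corollary \ref{cor:main} then yields that $A'(3, N_\epsilon, I)$ is discrete in $\mbR$; combined with the universal upper bound $\mld \leq 3$, this set is in fact finite, and it contains $A_{\text{can}}(3, I) \cap [1+\epsilon, 3]$.

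The main technical input is the local index bound for terminal $3$-folds coming from the Reid--Mori classification; without it, one could not pin down a single Gorenstein index $N_\epsilon$ to feed into Corollary \ref{cor:main}, since canonical $3$-fold singularities in general have unbounded Cartier index, and the main theorem of this paper requires a fixed index.
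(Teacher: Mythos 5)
Your proposal rests on the claim that $\mld_x(X,0) > 1$ forces $X$ to be terminal at $x$, so that the Reid--Mori classification of terminal threefold germs applies and bounds the local Gorenstein index. This implication is false. The minimal log discrepancy over $\{x\}$ only controls divisors $E$ with $\cent_X(E) = \{x\}$; it says nothing about crepant divisors whose center is a curve passing through $x$, and such divisors are exactly what make a canonical threefold fail to be terminal near $x$. A concrete example: let $X$ be the product of the $A_1$ surface singularity with $\mbA^1$, and $p$ the origin. Then $X$ is canonical (even Gorenstein) with a whole curve of $A_1$ singularities through $p$, so the germ $(X,p)$ is not terminal; yet $\mld_p(X) = 2 > 1$, since the crepant divisor over the singular curve has center that curve, not $\{p\}$. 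So one cannot conclude that $(X,x)$ is a terminal germ, and the Reid--Mori classification does not apply to $X$ at $x$.

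The paper handles this by passing to a $\mbQ$-factorial crepant \emph{terminalization} $f\colon Y \to X$ of $(X,\Delta)$ (via \cite[Corollary 1.4.3]{BCHM}, using that $(X,\Delta)$ is canonical), where $Y$ genuinely has terminal singularities and the index bound via Reid--Mori does hold. It then chooses a divisor $E$ computing $\mld_x(X,\Delta)$ and splits into three cases according to $\dim \cent_Y(E)$, transferring the mld computation to $Y$ differently in each case (equality of mld's when the center is a point, Ambro's formula $\mld_y(Y,\Delta_Y) = 1 + \mld_x(X,\Delta)$ when the center is a curve, and a direct coefficient computation when $E$ is a divisor on $Y$). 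Your proposal skips the terminalization entirely and would require, as an input, that canonical (not just terminal) threefold germs with $\mld_x \geq 1+\epsilon$ have Gorenstein index bounded in terms of $\epsilon$; this is a genuinely stronger statement than the terminal classification and is not established by the argument you give. Without that input, the reduction to Corollary~\ref{cor:main} with a single index $N_\epsilon$ does not go through. (A smaller issue: after replacing $\Delta$ by the corresponding $\mbR$-ideal via $N_\epsilon$-Cartier divisors, one lands in $A'(3, N_\epsilon, \tfrac{1}{N_\epsilon}I)$ rather than $A'(3, N_\epsilon, I)$, as in the paper's use of $\tfrac{1}{l}I$.)
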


Theorem \ref{thm:main} is proved by induction on $\dim _{\mbQ} \Span_{\mbQ} (I \cup \{ 1 \})$, 
the dimension of the $\mbQ$-vector space generated by $I \cup \{ 1 \}$. 
In the inductive step, we need the following theorem, 
which is about a perturbation of an irrational coefficient of log canonical pairs. 

\begin{thm}\label{thm:perturbe}
Fix $d \in \mbZ _{>0}$. Let $r_1, \ldots, r_{c'}$ be positive real numbers and let $r_0 = 1$. 
Assume that $r_0, \ldots, r_{c'}$ are $\mbQ$-linearly independent. 
Let $s_1, \ldots, s_c : \mbR^{c'+1} \to \mbR$ be $\mbQ$-linear functions from $\mbR ^{c'+1}$ to $\mbR$. 
Assume that $s_i (r_0, \ldots , r_{c'}) \in \mbR _{\ge 0}$ for each $i$. 
Then there exists a positive real number $\epsilon >0$ such that the following holds:
For any $\mbQ$-Gorenstein normal variety $X$ of dimension $d$ and $\mbQ$-Cartier effective 
Weil divisors $D_1, \ldots, D_c$ on $X$, 
if $(X, \sum_{1 \le i \le c} s_i(r_0, \ldots, r_{c'})D_i)$ is lc, then 
$(X, \sum_{1 \le i \le c} s_i(r_0, \ldots, r_{c' -1}, t)D_i)$ is also lc for any $t$ 
satisfying $|t - r_{c'}| \le \epsilon$. 
\end{thm}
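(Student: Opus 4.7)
The plan is to argue by contradiction, combining a $\mbQ$-linear independence argument (that rigidifies the behaviour of lc places under the perturbation) with the generalized rationality theorem for accumulation points of log canonical thresholds (the extension of the Hacon--M\textsuperscript{c}Kernan--Xu result established earlier in the paper). Write each $\mbQ$-linear function as $s_i(x_0, \ldots, x_{c'}) = \sum_{j=0}^{c'} a_{ij} x_j$ with $a_{ij} \in \mbQ$, put $\ell(t) := \sum_i s_i(r_0, \ldots, r_{c'-1}, t) D_i$, and, for any divisor $E$ over $X$, set $u_j := \sum_i a_{ij} \ord_E D_i \in \mbQ$. Then
\[
a_E(X, \ell(t)) \;=\; (a_E(X) - u_0) - \sum_{j=1}^{c'-1} u_j r_j - u_{c'} t
\]
is affine in $t$, so $T_X := \{t \in \mbR : (X, \ell(t)) \text{ is lc}\}$ is a closed interval $[\tau^-(X), \tau^+(X)]$. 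The key algebraic observation is that whenever $a_E(X, \ell(r_{c'})) = 0$, the resulting relation $a_E(X) - u_0 = u_1 r_1 + \cdots + u_{c'} r_{c'}$ among rationals and $1, r_1, \ldots, r_{c'}$ forces $u_1 = \cdots = u_{c'} = 0$ by $\mbQ$-linear independence; in particular the slope $-u_{c'}$ vanishes, so such an $E$ places no constraint on $T_X$ near $r_{c'}$. It follows that whenever $r_{c'} \in T_X$, one actually has $r_{c'} \in \Int T_X$, so the real content of the theorem is the uniformity of this interior neighbourhood across all $X$.

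Suppose the desired $\epsilon$ does not exist. By symmetry, extract pairs $(X_n, (D_{i,n}))$ with $\tau_n^+ := \tau^+(X_n) \searrow r_{c'}$ strictly, each $\tau_n^+$ attained by some divisor $E_n$ over $X_n$ with $a_{E_n}(X_n, \ell_n(\tau_n^+)) = 0$ and slope $-u_{c',n} < 0$. Solving for $\tau_n^+$ gives
\[
\tau_n^+ \;=\; \frac{a_{E_n}(X_n) - u_{0,n} - \sum_{j=1}^{c'-1} u_{j,n} r_j}{u_{c',n}} \;\in\; V := \mbQ + \mbQ r_1 + \cdots + \mbQ r_{c'-1},
\]
and, setting $\Delta_n^1 := \sum_i a_{ic'} D_{i,n}$ (a $\mbQ$-Cartier $\mbQ$-divisor with coefficients in the fixed finite rational set $\{a_{ic'}\}$), the positive quantity $\lambda_n := \tau_n^+ - r_{c'}$ is a log canonical threshold of $\Delta_n^1$ with respect to the pair $(X_n, \ell_n(r_{c'}))$ whose coefficients lie in the fixed finite set $\{s_i(r_0, \ldots, r_{c'})\}$. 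When some $a_{ic'} < 0$, split $\Delta_n^1$ into positive and negative parts; by the linear-independence argument above, the coefficients of $\ell_n(r_{c'})$ along the relevant components stay positive, so $\ell_n(t)$ remains effective for $t$ near $r_{c'}$ and the LCT interpretation is legitimate.

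Finally, apply the generalized rationality-of-accumulation-points theorem to this LCT configuration: every accumulation point of $\{\lambda_n\}$ is a $\mbQ$-linear combination of $1, r_1, \ldots, r_{c'-1}$, i.e.\ lies in $V - r_{c'}$. Since $\lambda_n \to 0$, this forces $0 \in V - r_{c'}$, i.e.\ $r_{c'} \in V$, contradicting the $\mbQ$-linear independence of $1, r_1, \ldots, r_{c'}$. The symmetric argument at $\tau^-(X_n) \nearrow r_{c'}$ completes the proof. The main obstacle in this plan is the LCT reinterpretation step: one must fit the data $(X_n, \ell_n(r_{c'}), \Delta_n^1)$ cleanly into the hypotheses of the generalized rationality theorem---in particular when $\Delta_n^1$ is not effective, and when the pair's coefficient set $\{s_i(r_0,\ldots,r_{c'})\}$ contains the irrational quantity $r_{c'}$---so that the conclusion really is that the accumulation points of $\{\lambda_n\}$ lie in the $\mbQ$-span of $1, r_1, \ldots, r_{c'-1}$ rather than a larger $\mbQ$-subspace of $\mbR$ containing $r_{c'}$.
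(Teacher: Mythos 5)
Your overall strategy matches the paper's: both are contradiction arguments feeding into the generalized rationality theorem for accumulation points of log canonical thresholds (Corollary \ref{cor:rationality}). Your preliminary observation that whenever $a_E(X,\ell(r_{c'}))=0$ the $\mbQ$-linear independence of $1,r_1,\dots,r_{c'}$ forces the slope $u_{c'}$ to vanish is correct and pleasant; it explains why $r_{c'}$ always lies in the interior of $T_X$ when it lies in $T_X$ at all, and correspondingly why $\tau_n^+\in V$ for each fixed $n$. But this pointwise fact does not control limits: $V=\mbQ+\mbQ r_1+\cdots+\mbQ r_{c'-1}$ is dense in $\mbR$, so there is nothing yet preventing $\tau_n^+\searrow r_{c'}\notin V$.

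The genuine gap is exactly the obstacle you flag at the end, and it is not minor: as set up, your LCT reinterpretation uses the base pair $(X_n,\ell_n(r_{c'}))$ whose coefficient set $\{s_i(r_0,\dots,r_{c'})\}$ already involves $r_{c'}$. Applying the rationality theorem with that coefficient set yields accumulation points in $\Span_{\mbQ}(\{1\}\cup\{s_i(r_0,\dots,r_{c'})\})$, a space that in general contains $r_{c'}$; then $\lambda_n\to 0$ gives $0$ in that span, which is trivially true and yields no contradiction. The paper's fix, which you did not supply, is to shift the reference point to a nearby rational: pick $t^-,t^+\in\mbQ$ with $t^-<r_{c'}<t^+$ and $s_i(r_0,\dots,r_{c'-1},t)\ge 0$ on $[t^-,t^+]$, and rewrite
\[
\textstyle\sum_i s_i(r_0,\dots,r_{c'-1},t)D_i=\sum_i s_i(r_0,\dots,r_{c'-1},t^-)D_i+(t-t^-)\sum_i q_{ic'}D_i.
\]
Now the base coefficients $I=\{s_i(r_0,\dots,r_{c'-1},t^-)\}$ lie in $\Span_{\mbQ}(1,r_1,\dots,r_{c'-1})$, free of $r_{c'}$, and $\frac{h_l^--t^-}{m}\in\mfL_d(I)$ for suitable $m$ (this is also where the linear-functional-divisor formalism $\mcD_c(I)$, rather than an ad hoc splitting of $\Delta_n^1$ into positive and negative parts, cleanly handles the non-effective perturbation direction). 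Corollary \ref{cor:rationality} then forces $\frac{r_{c'}-t^-}{m}\in\Span_{\mbQ}(I\cup\{1\})\subset\Span_{\mbQ}(1,r_1,\dots,r_{c'-1})$, so $r_{c'}\in\Span_{\mbQ}(1,r_1,\dots,r_{c'-1})$, the desired contradiction. Without this change of base point your argument does not close.
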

\begin{rmk}
The positive real number $\epsilon$ in Theorem \ref{thm:perturbe} does not depend on $X$, 
but depends only on $d$, $r_1, \ldots, r_{c'}$, and $s_1, \ldots, s_c$. 
\end{rmk}

\noindent
Kawakita \cite{Kawakita:discrete} proved this theorem for a fixed variety $X$ using a method of generic limit, 
and prove the discreteness of log discrepancies for fixed $X$. 
When $c' = 1$ and each $s_i$ satisfies $s_i (\mbR _{\ge 0} ^2) \subset \mbR _{\ge 0}$, 
this theorem just states the rationality of accumulation points of log canonical thresholds proved by 
Hacon, M\textsuperscript{c}Kernan, and Xu \cite[Theorem 1.11]{HMX2}. 
Actually, the proof of Theorem \ref{thm:perturbe} heavily depends on their argument. 
We also note that the rationality of accumulation points of log canonical thresholds on smooth varieties 
was proved by Koll\'ar \cite[Theorem 7]{Kollar:which} and 
by de Fernex and Musta{\c{t}}{\u{a}} \cite[Corollary 1.4]{dFM:limit} using a method of generic limit.

The paper is organized as follows:
In Section \ref{section:pre}, we review some definitions and facts from 
the minimal model theory. 
Further we list some results on the ACC for log canonical thresholds by Hacon, M\textsuperscript{c}Kernan, and Xu \cite{HMX2}. 
In Section \ref{section:key}, we prove the key proposition (Theorem \ref{thm:accum_mfG}) 
which is necessary to prove Theorem \ref{thm:perturbe}. 
The essential idea of proof is due to the paper \cite{HMX2}. 
In Section \ref{section:perturbe}, we prove Theorem \ref{thm:perturbe}. 
In Section \ref{section:main}, we prove the main theorem (Theorem \ref{thm:main}) 
and the corollaries. 

\subsection*{Notation and convention}
Throughout this paper, we work over the field of complex numbers $\mbC$. 

\begin{itemize}
\item For an $\mbR$-divisor $D$ and a subset $I \subset \mbR$, 
we write $D \in I$ when all the non-zero coefficients of $D$ belong to $I$. 
\item For an $\mbR$-ideal sheaf $\mfA = \prod \mfa _i ^{r_i}$ and a subset $I \subset \mbR$, 
we write $\mfA \in I$ when all the non-zero coefficients $r_i$ of $\mfA$ belong to $I$. 
\end{itemize}

\section{Preliminaries}\label{section:pre}
\subsection{Minimal log discrepancies}\label{subsection:mld}
We recall some notations in the theory of singularities in the minimal model program. 
For more details we refer the reader \cite{KM}. 

A \textit{log pair} $(X, \Delta)$ is a normal variety $X$ and an effective $\mbR$-divisor $\Delta$ such that $K_X + \Delta$ is 
$\mbR$-Cartier. 
If $X$ is $\mbQ$-Gorenstein, we sometimes identify $X$ with the log pair $(X, 0)$. 

An \textit{$\mbR$-ideal sheaf} on $X$ is a formal product $\mfa _1 ^{r_1} \cdots \mfa _s ^{r_s}$, 
where $\mfa _1 , \ldots , \mfa _s $ are ideal sheaves on $X$ and $r_1 , \ldots , r_s$ 
are positive real numbers. 
For a log pair $(X, \Delta)$ and an $\mbR$-ideal sheaf $\mfa$, 
we call $(X, \Delta, \mfa)$ a \textit{log triple}. 
When $\Delta = 0$ (resp.\  $\mfA = \mcO _X$), 
we sometimes drop $\Delta$ (resp.\ $\mfA$) and write $(X, \mfa)$ (resp.\ $(X, \Delta)$). 

For a proper birational morphism $f: X' \to X$ from a normal variety $X'$ 
and a prime divisor $E$ on $X'$, the \textit{log discrepancy} of $(X, \Delta, \mfa)$ 
at $E$ is defined as
\[
a_E (X, \Delta, \mfa) := 1 + \coeff _E (K_{X'} - f^* (K_X + \Delta)) -\ord _E \mfa, 
\]
where $\ord _E \mfa := \sum _{i=1} ^s r_i \ord _E \mfa _i$. 
The image $f(E)$ is called the \textit{center of $E$ on $X$}, and we denote it by $\cent _X (E)$. 
For a closed subset $Z$ of $X$, the \textit{minimal log discrepancy} (\textit{mld} for short) over $Z$ is 
defined as 
\[
\mld _Z (X, \Delta, \mfa) := \inf _{\cent _X(E) \subset Z} a_E (X, \Delta, \mfa). 
\]
In the above definition, 
the infimum is taken over all prime divisors $E$ on $X'$ with the center $\cent _X (E) \subset Z$, 
where $X'$ is a higher birational model of $X$, that is, 
$X'$ is the source of some proper birational morphism $X' \to X$.

\begin{rmk}\label{rmk:attain}
It is known that $\mld _Z (X, \Delta, \mfa)$ is in $\mbR _{\ge 0} \cup \{ - \infty \}$ and that
if $\mld _Z (X, \Delta, \mfa) \ge 0$, then the infimum on the right hand side in the definition is actually 
the minimum. 
\end{rmk}
\begin{rmk}\label{rmk:cartier}
Let $D_i$ be effective Weil divisors on $X$, 
and $\mfa_i := \mcO _X (- D_i)$ the corresponding ideal sheaves. 
When $X$ is $\mbQ$-Gorenstein and $D_i$ are Cartier divisors, 
we can identify $(X, \sum r_i D_i)$ and $(X, \prod \mfa_i ^{r_i})$. 
Indeed, for any divisor $E$ over $X$, we have $a_E (X, \sum r_i D_i) = a_E (X, \prod \mfa_i ^{r_i})$. 
\end{rmk}

For simplicity of notation, 
we write $\mld _x (X, \Delta, \mfa)$ instead of $\mld _{\{x\}} (X, \Delta, \mfa)$ 
for a closed point $x$ of $X$, and write 
$\mld (X, \Delta, \mfa)$ instead of $\mld _X (X, \Delta, \mfa)$. 

We say that the pair $(X, \Delta, \mfa)$ is \textit{log canonical} (\textit{lc} for short) 
if $\mld (X, \Delta, \mfa) \ge 0$. 
Further, we say that the pair $(X, \Delta, \mfa)$ is \textit{Kawamata log terminal} 
(\textit{klt} for short) 
if $\mld (X, \Delta, \mfa) > 0$. 
When $E$ is a divisor over $X$ such that $a_E(X, \Delta, \mfa) \le 0$, 
the center $\cent _X(E)$ is called a \textit{non-klt center}. 

We say that the pair $(X, \Delta, \mfa)$ is \textit{canonical} (resp.\ \textit{terminal}) 
if $a_E (X, \Delta, \mfa) \ge 1$ (resp.\ $> 1$) for any exceptional divisor $E$ over $X$.

\subsection{Extraction of divisors}
In this subsection, we recall some known results on extractions of divisors.

We can extract a divisor whose log discrepancy is at most one. 
\begin{thm}\label{thm:extraction}
Let $(X, \Delta)$ be a klt pair, and 
let $E$ be a divisor over $X$ such that $a_E(X, \Delta) \le 1$. 
Then there exists a projective birational morphism $\pi : Y \to X$ such that 
$Y$ is $\mbQ$-factorial and the only exceptional divisor is $E$. 
\end{thm}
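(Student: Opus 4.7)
My plan is to apply the relative minimal model program to a log resolution of $(X,\Delta)$ equipped with a carefully chosen boundary, designed so that the MMP contracts every exceptional divisor except $E$.

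First I would take a log resolution $f \colon W \to X$ of $(X, \Delta)$ on which $E$ appears as a prime divisor. Label the $f$-exceptional prime divisors $E_1 = E, E_2, \ldots, E_n$, and set $a_i := a_{E_i}(X, \Delta)$. Since $(X, \Delta)$ is klt we have $a_i > 0$ for every $i$, while $a_1 \le 1$ by hypothesis. By definition of log discrepancy,
\[
K_W + f_*^{-1}\Delta + \sum_{i=1}^{n} (1 - a_i)\, E_i = f^*(K_X + \Delta).
\]
Choose small real numbers $0 < \epsilon_i < a_i$ for each $i \ge 2$ and define
\[
\Gamma := f_*^{-1}\Delta + (1 - a_1)\, E_1 + \sum_{i=2}^{n} (1 - \epsilon_i)\, E_i.
\]
All coefficients of $\Gamma$ lie in $[0,1)$: those of $f_*^{-1}\Delta$ because $(X,\Delta)$ is klt, $1 - a_1 \in [0,1)$ because $0 < a_1 \le 1$, and $1 - \epsilon_i \in (0,1)$ by choice, so $(W,\Gamma)$ is klt. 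Subtracting the two identities gives
\[
K_W + \Gamma = f^*(K_X + \Delta) + \sum_{i=2}^{n} (a_i - \epsilon_i)\, E_i,
\]
so $K_W + \Gamma - f^*(K_X + \Delta)$ is effective, $f$-exceptional, and supported inside $E_2 \cup \cdots \cup E_n$; crucially $E_1$ does not occur.

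Next I would invoke BCHM in its relative form to run a $(K_W + \Gamma)$-MMP over $X$ (if necessary, after a small $f$-ample perturbation to meet the technical hypotheses). This produces a $\mbQ$-factorial relative minimal model $\pi \colon Y \to X$ on which $K_Y + \Gamma_Y$ is $\pi$-nef. Taking strict transforms in the previous identity yields
\[
K_Y + \Gamma_Y - \pi^*(K_X + \Delta) = \sum_{i=2}^{n} (a_i - \epsilon_i)\, (E_i)_Y,
\]
which is $\pi$-nef, effective, and $\pi$-exceptional. The negativity lemma forces it to vanish, so every $E_i$ with $i \ge 2$ is contracted by the MMP.

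The main obstacle is to verify that $E$ itself survives on $Y$. Granting the vanishing above, the identity on $Y$ collapses to $K_Y + \Gamma_Y = \pi^*(K_X + \Delta)$, which forces $a_E(Y, \Gamma_Y) = a_E(X, \Delta) = a_1$. On the other hand, a standard property of the MMP says that a divisorial contraction strictly raises the log discrepancy of the divisor it contracts; if $E$ were contracted at some step along the MMP, this would give $a_E(Y, \Gamma_Y) > a_E(W, \Gamma) = 1 - (1 - a_1) = a_1$, a contradiction. Consequently $E$ is the unique $\pi$-exceptional prime divisor on $Y$, and $\pi$ is the desired extraction.
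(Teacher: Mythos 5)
Your argument is correct. The paper itself offers no independent proof of this statement: it simply cites \cite[Corollary 1.4.3]{BCHM}, which asserts exactly that one can extract any finite collection of divisors of log discrepancy at most one over a klt pair. What you have written is, in effect, a self-contained reproof of the single-divisor case of that corollary, and your execution follows the standard route: take a log resolution on which $E$ appears, assign $E$ the crepant coefficient $1-a_1$ and the other exceptional divisors $E_i$ coefficients $1-\epsilon_i$ chosen so that $a_{E_i}(W,\Gamma)=\epsilon_i < a_i$, run a $(K_W+\Gamma)$-MMP over $X$, use the negativity lemma to see that the $\pi$-nef, effective, $\pi$-exceptional divisor $K_Y+\Gamma_Y - \pi^*(K_X+\Delta)$ must vanish (hence every $E_i$ with $i\ge 2$ is contracted), and use monotonicity of log discrepancies under MMP steps to rule out the contraction of $E$ itself. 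Two small remarks: for a birational $f$, the pair $(W,\Gamma)$ is automatically big over $X$, so no $f$-ample perturbation is needed to invoke BCHM; and your constraint on $\epsilon_i$ should implicitly also include $\epsilon_i\le 1$ so that $\Gamma\ge 0$, which is subsumed in ``small.'' Neither affects the correctness of the proof.
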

\begin{proof}
This is the special case of \cite[Corollary 1.4.3]{BCHM}. 
\end{proof}

When $(X, \Delta)$ is lc, we can find a modification which is dlt. 
We call a log pair $(X, \Delta)$ 
\textit{divisorial log terminal} (\textit{dlt} for short) 
when there exists a log resolution $f : Y \to X$ such that 
$a_E(X, \Delta) > 0$ for any $f$-exceptional divisor $E$ on $Y$. 

\begin{thm}[dlt modification]\label{thm:dlt_modification}
Let $(X, \Delta)$ be a lc pair. Then there exists a projective birational morphism $f : Y \to X$ 
with the following properties:
\begin{itemize}
\item $Y$ is $\mbQ$-factorial. 
\item $(Y, \Delta _Y)$ is dlt, where we define $\Delta _Y$ as $K_Y + \Delta _Y = f^* (K_X + \Delta)$. 
\item $a_E (X, \Delta) = 0$ for every $f$-exceptional divisor $E$. 
\end{itemize}
\end{thm}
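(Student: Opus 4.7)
The plan is to construct $Y$ by running a relative minimal model program starting from a log resolution of $(X, \Delta)$. Take a log resolution $g \colon W \to X$, let $E_1, \dots, E_m$ be the $g$-exceptional prime divisors, and set
\[
\Gamma_W := g_*^{-1}\Delta + \sum_{i=1}^m E_i.
\]
Then $(W, \Gamma_W)$ is log smooth, hence dlt, and a direct computation yields
\[
K_W + \Gamma_W \;=\; g^*(K_X + \Delta) + F, \qquad F := \sum_{i=1}^m a_{E_i}(X, \Delta)\, E_i.
\]
Since $(X, \Delta)$ is lc, every $a_{E_i}(X, \Delta) \ge 0$, so $F$ is effective and $g$-exceptional, and $\Supp(F)$ is exactly the set of $E_i$ with $a_{E_i}(X, \Delta) > 0$.

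Next, I would run a $(K_W + \Gamma_W)$-MMP over $X$, equivalently an $F$-MMP over $X$. By the negativity lemma, any $(K_W + \Gamma_W)$-negative extremal ray over $X$ must be covered by curves on which $F$ has negative intersection, and hence lies inside $\Supp(F)$. Consequently such an MMP contracts precisely the divisors $E_i$ with $a_{E_i}(X, \Delta) > 0$ and keeps the remaining $E_i$ intact. Call $f \colon Y \to X$ the resulting model and let $\Gamma_Y$ be the pushforward of $\Gamma_W$. Because $F$ is entirely supported on the contracted divisors, its pushforward is $0$, so $K_Y + \Gamma_Y = f^*(K_X + \Delta)$; the $f$-exceptional divisors are exactly those with $a_E(X, \Delta) = 0$, and $\Gamma_Y$ agrees with the boundary $\Delta_Y$ in the statement. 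The pair $(Y, \Delta_Y)$ is dlt because dltness is preserved by each step of a $(K+\Gamma)$-MMP starting from a dlt pair, and $Y$ is $\mbQ$-factorial because the MMP preserves $\mbQ$-factoriality. This yields all three required properties.

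The main obstacle is establishing existence and termination of this relative MMP. Since $\Gamma_W$ carries components of coefficient $1$, the pair $(W, \Gamma_W)$ is dlt but not klt, so Theorem \ref{thm:extraction} (a consequence of BCHM) does not apply verbatim. The standard workaround is a klt perturbation together with an MMP with scaling: replace $\Gamma_W$ by a small perturbation $\Gamma_W^\epsilon$ obtained by slightly lowering its coefficient-$1$ components and compensating by a small general ample contribution on $W$, chosen so that $(W, \Gamma_W^\epsilon)$ is klt while preserving the sign of intersection with $F$ along every $g$-exceptional curve. For such $\Gamma_W^\epsilon$, BCHM supplies a $(K_W + \Gamma_W^\epsilon)$-MMP over $X$ with scaling that exists and terminates; a coefficient check then shows that for all sufficiently small $\epsilon$ its steps coincide with those of the original $F$-MMP, so the perturbed MMP contracts exactly $\Supp(F)$ and yields the desired dlt modification of $(X, \Delta)$.
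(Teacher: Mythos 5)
The paper disposes of this statement by citation (Fujino, \emph{Fundamental theorems for the log minimal model program}, Theorem 10.4), so you are attempting a self-contained proof. Your construction is the standard one and is, up to the termination step, the argument that underlies Fujino's theorem: take a log resolution $g\colon W\to X$, put every exceptional divisor into the boundary with coefficient one so that $K_W+\Gamma_W=g^*(K_X+\Delta)+F$ with $F\ge 0$ effective and $g$-exceptional, run a $(K_W+\Gamma_W)$-MMP over $X$, and observe that each divisorial contraction contracts a divisor in $\Supp F$ (because the step is $F$-negative and $F\ge 0$) while the negativity lemma forces $F$ to have been pushed to zero once $K+\Gamma$ is nef over $X$. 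The bookkeeping for $\mbQ$-factoriality, dltness, and the final crepant identity is all correct.

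The gap is in the termination argument, which you correctly identify as the crux but do not actually close. You propose to perturb $\Gamma_W$ to a klt $\Gamma_W^\epsilon$, run the BCHM MMP with scaling for $(W,\Gamma_W^\epsilon)$ over $X$, and then assert that ``a coefficient check shows that for all sufficiently small $\epsilon$ its steps coincide with those of the original $F$-MMP.'' This cannot be right as stated: the ``original $F$-MMP'' whose existence and termination you are trying to prove is not available to compare against, an MMP is not a uniquely determined sequence of steps, and as $\epsilon$ varies the perturbed MMP can change in both length and type. Moreover, after lowering the coefficient-one components and adding an ample, $K_W+\Gamma_W^\epsilon$ is no longer of the form $g^*(K_X+\Delta)+(\text{effective exceptional})$, so the key structural fact driving the whole argument is lost, and the terminal model of the perturbed MMP is nef for $K+\Gamma_W^\epsilon$, not for $K+\Gamma_W$. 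To make the argument work one must instead either (i) invoke termination with scaling for $\mbQ$-factorial dlt pairs over a birational base, which is genuine content beyond the klt case of BCHM and rests on Shokurov--Fujino special termination (this is precisely what Fujino's Theorem 10.4 packages), or (ii) run the perturbed klt MMP to completion without any claim of ``matching steps'' and then argue directly, using the negativity lemma on the output, that the resulting model already has the three required properties for the \emph{unperturbed} pair; this requires choosing the perturbation much more carefully than ``lower the ones and add a small ample,'' since that choice destroys the effectivity of the discrepancy divisor. As written, the perturbation paragraph asserts the conclusion rather than proving it.
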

\begin{proof}
See \cite[Theorem 10.4]{Fujino:fundamental} for instance. 
\end{proof}

\subsection{ACC for log canonical thresholds}
In Section \ref{section:key}, 
we need the following ACC properties proved by Hacon, M\textsuperscript{c}Kernan, and Xu \cite{HMX2}. 

\begin{thm}[Hacon, M\textsuperscript{c}Kernan, Xu {\cite[Theorem 1.4]{HMX2}}]\label{thm:lct}
Fix $d \in \mbZ _{>0}$ and a subset $I \subset [0,1]$ satisfying the DCC. 

Then there is a finite subset $I_0 \subset I$ with the following property:
If $(X, \Delta)$ is a log pair such that 
\begin{itemize}
\item $(X, \Delta)$ is lc, $\dim X = d$, $\Delta \in I$, and
\item there exists a non-klt center $Z \subset X$ which is contained in every component of $\Delta$, 
\end{itemize}
then $\Delta \in I_0$. 

\end{thm}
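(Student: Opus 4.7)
The plan is to argue by contradiction, using dlt modification, iterated adjunction to a minimal log canonical center, and a reduction to a lower-dimensional statement of the same shape. Suppose no such finite $I_0$ exists. Then, enumerating a cofinal tower of finite subsets of $I$, one extracts a sequence of $d$-dimensional lc pairs $(X_n, \Delta_n)$ with $\Delta_n \in I$, each admitting a non-klt center $Z_n$ contained in every component of $\Delta_n$, and such that at least one coefficient $a_n$ appearing in $\Delta_n$ is strictly increasing with $n$ (this uses DCC of $I$).

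First I would reduce to the dlt setting by applying Theorem \ref{thm:dlt_modification} to each $(X_n, \Delta_n)$, obtaining a $\mbQ$-factorial dlt pair $(Y_n, \Gamma_n)$ with $K_{Y_n} + \Gamma_n = f_n^*(K_{X_n} + \Delta_n)$. Exceptional divisors enter $\Gamma_n$ with coefficient $1$, so $\Gamma_n \in I \cup \{1\}$ still satisfies DCC, and the birational transform of $Z_n$ contains a minimal lc center $W_n$ which remains contained in every component of $\Gamma_n$. After passing to a subsequence I may assume that $W_n = S_n^{(1)} \cap \cdots \cap S_n^{(k)}$ is the intersection of a fixed number $k$ of boundary components. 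Iterated divisorial adjunction along these components produces a klt pair $(W_n, \Theta_n)$ with $K_{W_n} + \Theta_n = (K_{Y_n} + \Gamma_n)|_{W_n}$, and the classical adjunction formula constrains the coefficients of $\Theta_n$ to lie in the hyperstandard set
\[
D(I) := \left\{ \frac{m - 1 + \sum_j p_j b_j}{m} \ \middle|\ m \in \mbZ_{>0},\ p_j \in \mbZ_{\ge 0},\ b_j \in I \cup \{1\} \right\} \cap [0, 1],
\]
which still satisfies DCC.

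Because $W_n$ meets every component of $\Gamma_n$, each component of $\Delta_n$ restricts non-trivially to $W_n$ and contributes a distinct summand to $\Theta_n$; the strictly increasing coefficient $a_n$ therefore descends to a non-stationary coefficient in $\Theta_n$ (by tracking the formula above, the numerator is forced to vary). To conclude, I would then apply induction on $d$: the pair $(W_n, \Theta_n)$ is lc of strictly smaller dimension, the image of the ambient non-klt center gives a non-klt center of $\Theta_n$ (via Kawamata--Shokurov connectedness), and after contracting any component of $\Theta_n$ not passing through it one lands in the inductive hypothesis. The base case $d = 1$ is elementary since lc curves with a non-klt point in every component force every coefficient to equal $1$.

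The main obstacle is transferring the non-klt-center-in-every-component hypothesis faithfully from $(Y_n, \Gamma_n)$ down to $(W_n, \Theta_n)$, since a minimal lc center is klt on itself. A likely remedy is to weaken the inductive statement to allow a distinguished closed subscheme $z \in W$ through which every component of $\Theta$ passes and at which $(W, \Theta)$ has a non-klt germ; alternatively, in place of induction one can appeal directly to the global ACC theorem of Hacon--M\textsuperscript{c}Kernan--Xu for numerically trivial lc pairs after running an MMP on $W_n$ relative to a base cutting $K_{W_n} + \Theta_n$ down to a numerically trivial divisor along a fibre through the distinguished point. Either route requires carefully showing that the multiplicities $m$ and $p_j$ arising in the adjunction formula stay bounded, so that the increase of $a_n$ genuinely violates DCC of $D(I)$ or ACC of the lc threshold set.
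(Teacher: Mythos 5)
This statement is quoted from \cite[Theorem 1.4]{HMX2} and not proved in the paper under review, so there is no in-text proof to compare against; I will evaluate your argument against the approach of Hacon--M\textsuperscript{c}Kernan--Xu, which is also the scheme that Theorem \ref{thm:local_global} of this paper mirrors.

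Your primary route (induction on dimension via iterated adjunction to a minimal lc center) has a gap that you flag but that is in fact fatal rather than a technical inconvenience. A minimal lc center $W_n$ of a dlt pair is an intersection of components of $\lfloor \Gamma_n \rfloor$, and the adjoint pair $(W_n, \Theta_n)$ is \emph{klt}. The inductive hypothesis requires a non-klt center contained in every component of the boundary, but $(W_n, \Theta_n)$ has no non-klt centers at all, so the induction does not close. The proposed fix of passing to a germ at a distinguished point through which all components of $\Theta_n$ pass replaces the statement by a local finiteness claim with no non-klt constraint, which is a genuinely different (and, without further hypotheses, false) assertion; ``contracting components not through it'' does not restore the hypothesis either. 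A second, independent gap: the claim that the strictly increasing coefficient $a_n$ ``descends to a non-stationary coefficient of $\Theta_n$'' is not established. Adjunction replaces a coefficient $a$ by a quantity of the form $\frac{m-1+\sum_j p_j b_j}{m}$, and without an a priori uniform bound on $m$ (and on the $p_j$) in $n$, the combination can be stationary even when $a_n$ is not; as you note, bounding $m$ is nontrivial and is precisely the crux, not an afterthought.

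The route that actually closes the argument is your second alternative, which is what HMX2 do: rather than adjoining all the way down to a klt minimal center, take a dlt modification $f\colon Y \to X$, choose an exceptional divisor $E$ with $a_E(X,\Delta)=0$ whose center is $Z$, and restrict $K_Y + \Gamma$ to a \emph{general fiber} $F$ of $E \to f(E)$. One then obtains a projective lc pair $(F, \Delta_F)$ with $K_F + \Delta_F \equiv 0$ and $\Delta_F \in D(I)$, to which Theorem \ref{thm:num_triv} (the global ACC for numerically trivial pairs) applies directly, with no non-klt hypothesis needed and no induction on dimension via klt adjoints. The assumption that $Z$ lies in every component of $\Delta$ enters exactly here: it guarantees that each component of $\Delta$ meets $F$, so a non-stationary coefficient of $\Delta$ forces a non-stationary coefficient of $\Delta_F$, contradicting the finiteness from Theorem \ref{thm:num_triv}. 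This step should be made explicit; as written your sketch leaves it implicit, and it is the hinge on which the whole reduction turns.
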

\begin{thm}[Hacon, M\textsuperscript{c}Kernan, Xu {\cite[Theorem 1.5]{HMX2}}]\label{thm:num_triv}
Fix $d \in \mbZ _{>0}$ and a subset $I \subset [0,1]$ satisfying the DCC. 

Then there is a finite subset $I_0 \subset I$ with the following property:
If $(X, \Delta)$ is a projective log pair such that 
\begin{itemize}
\item $(X, \Delta)$ is lc, $\dim X = d$, $\Delta \in I$, and
\item $K_X + \Delta \equiv 0$, 
\end{itemize}
then $\Delta \in I_0$. 
\end{thm}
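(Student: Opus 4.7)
The plan is to argue by contradiction, combining DCC of $I$ with the ACC for log canonical thresholds (the main theorem of the Hacon--M\textsuperscript{c}Kernan--Xu paper from which Theorems \ref{thm:lct} and \ref{thm:num_triv} are quoted) and the preceding Theorem \ref{thm:lct}. Suppose the conclusion fails, so there exists a sequence of projective lc pairs $(X_i,\Delta_i)$ with $\dim X_i=d$, $\Delta_i\in I$, and $K_{X_i}+\Delta_i\equiv 0$, such that the set of coefficients appearing in $\bigcup_i\Delta_i$ is not contained in any finite subset of $I$. Passing to a subsequence and using the DCC of $I$, one can extract distinct coefficients $a_i\in I$, occurring on components $D_i$ of $\Delta_i$, that strictly increase to a limit $a_\infty\le 1$. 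Write $\Delta_i=\Gamma_i+a_i D_i$.

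The first step is to pin down the log canonical threshold on the distinguished component. Let $t_i$ be the lct of $D_i$ with respect to $(X_i,\Gamma_i)$, so $t_i\ge a_i$ by the lc assumption on $(X_i,\Delta_i)$. The ACC for log canonical thresholds forces $\{t_i\}$ to satisfy the ACC; combined with $t_i\ge a_i\nearrow a_\infty$, after further extraction $t_i$ is eventually constant, equal to some $t^*\ge a_\infty$. Replacing $a_i$ by $t^*$ in $\Delta_i$ produces a boundary $\Delta_i^\sharp=\Gamma_i+t^*D_i$; by the definition of the lct, $(X_i,\Delta_i^\sharp)$ is lc and admits a non-klt center whose image on $X_i$ is contained in $D_i$, while $K_{X_i}+\Delta_i^\sharp\equiv (t^*-a_i)D_i$.

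The next step is to upgrade this single non-klt center to one contained in every component of the boundary. Iterating the previous argument component by component, using a dlt modification (Theorem \ref{thm:dlt_modification}) and connectedness of non-klt loci, one constructs, possibly after a suitable MMP on a judicious perturbation, a modified boundary $\Delta_i^\natural$ such that $(X_i,\Delta_i^\natural)$ is lc with a single non-klt center $Z_i$ contained in every component of $\Delta_i^\natural$. This is exactly the hypothesis of Theorem \ref{thm:lct}, whose application produces a finite subset $I_0$ containing all coefficients of $\Delta_i^\natural$. Unwinding the perturbations forces the original distinguished coefficients $a_i$ to lie in a finite set, contradicting the choice of infinitely many distinct values among them.

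The main obstacle, and the technical heart of the argument, is the construction in the previous paragraph: producing a non-klt center through every component of the boundary. It is here that the hypothesis $K_{X_i}+\Delta_i\equiv 0$ is decisive, since numerical triviality is what allows the MMP and perturbation arguments to preserve lc-ness while creating the required common lc center. The interplay of connectedness of non-klt loci, adjunction on lc centers, and boundedness aspects of Fano-type MMPs all enter here, and carrying it out rigorously is essentially the whole work.
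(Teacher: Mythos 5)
The paper does not prove this statement; it is quoted verbatim as {\cite[Theorem 1.5]{HMX2}}, so there is no internal proof to compare against. Judged on its own terms, your sketch has two genuine gaps.

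First, the step where you pin down the threshold. From the ACC for log canonical thresholds you conclude that, after extraction, $t_i$ is ``eventually constant, equal to some $t^* \ge a_\infty$.'' This does not follow. ACC of the set $\{t_i\}$ only forbids a strictly \emph{increasing} infinite sequence; a strictly decreasing sequence $t_1 > t_2 > \cdots$ bounded below by $a_\infty$ is perfectly compatible with ACC and would not stabilize. So the replacement $a_i \mapsto t^*$ that the rest of the argument relies on is not available without further work.

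Second, and more seriously, the central construction --- modifying each $(X_i,\Delta_i)$ to a pair with a non-klt center contained in \emph{every} component of the boundary, so as to invoke Theorem~\ref{thm:lct} --- is never carried out. You gesture at ``a dlt modification,'' ``connectedness of non-klt loci,'' ``a suitable MMP on a judicious perturbation,'' and then concede that ``carrying it out rigorously is essentially the whole work.'' That concession is accurate: this is precisely the content of the theorem, and a proof that asserts rather than performs the key reduction is not a proof. It is also not clear that Theorem~\ref{thm:num_triv} can be derived from Theorem~\ref{thm:lct} in the direct way you indicate; in \cite{HMX2} the local statement (their Theorem~1.4) and the global numerically-trivial statement (their Theorem~1.5) are established by an interlocking induction on dimension, with the MMP used to reduce to a Mori fiber space and adjunction/restriction to a general fiber to drop dimension, rather than by first manufacturing a common non-klt center on $X$ itself. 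As written, the proposal is a plan with the decisive step missing, not an argument.
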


\section{Accumulation points of log canonical thresholds}\label{section:key}
The goal of this section is to prove Corollary \ref{cor:rationality}. 
It is a generalization of \cite[Theorem 1.11]{HMX2} and 
necessary for the proof of Theorem \ref{thm:perturbe}. 

Usually, the log canonical threshold is defined as follows: 
for a lc pair $(X, \Delta)$ and a $\mbQ$-Cartier $\mbZ$-Weil effective divisor $M$, 
\[
\LCT (\Delta; M) := \sup \{ c \in \mbR _{\ge 0} \mid \text{$(X, \Delta + cM)$ is lc} \}. 
\]
However, for the proof of Theorem \ref{thm:perturbe}, 
we need to treat the case when $M$ is not effective. 
According to this reason, we introduce the new threshold set $\mfL _d(I)$. 
It no longer satisfies the ACC, 
but we can prove the rationality of the accumulation points (Corollary \ref{cor:rationality}). 

Corollary \ref{cor:rationality} easily follows from 
Theorem \ref{thm:local_global} and Theorem \ref{thm:accum_mfG}. 
They are proved in essentially 
the same way of the proof of Proposition 11.5 and Proposition 11.7 in \cite{HMX2}. 
For the reader's convenience, we follow the proof of Proposition 11.5 and Proposition 11.7 in \cite{HMX2}, 
and use as same notations as possible.

First, we introduce some notations. 
For a subset $I \subset [0, + \infty)$, we define $I_+$ as follows:
\[
I_+ := \{0\} \cup \big\{ \sum _{1 \le i \le l} r_i \mid l \in \mbZ _{> 0}, r_1, \ldots , r_l \in I \big\}. 
\]
This becomes a discrete set if $I$ is discrete. 
When $D_i$ are finitely many distinct prime divisors and 
$d_i(t) : \mbR \to \mbR$ are $\mbR$-linear functions, 
then we call the formal finite sum $\sum _i d_i (t) D_i$ a \textit{linear functional divisor}. 
\begin{defi}[$\mcD _c(I)$]\label{defi:D_c}
Fix $c \in \mbR _{\ge 0}$ and a subset $I \subset [0, + \infty)$. 
For a linear functional divisor $\Delta (t) = \sum _i d_i (t) D_i$, 
we write $\Delta (t) \in \mcD _c(I)$ when the following conditions are satisfied:
\begin{itemize}
\item Each $d_i(t)$ is equal to $1$ or the form of $\frac{m-1+f+kt}{m}$, 
where $m \in \mbZ _{>0}$, $f \in I_+$, and $k \in \mbZ$. 
\item Further, $f+kt$ above can be written as $f+kt = \sum_j (f_j + k_j t)$, 
where $f_j \in I \cup \{ 0 \}$, $k_j \in \mbZ$, and $f_j + k_j c \ge 0$ hold for each $j$. 
\end{itemize}
Further, by abuse of notation, we also write $d_i(t) \in \mcD _c(I)$ if $d_i (t)$ 
satisfies the above conditions. 
\end{defi}

The form of the coefficient $d_i(t)$ is preserved by adjunction. 
\begin{lem}\label{lem:adjunction}
Fix $c \in \mbR _{\ge 0}$ and a subset $I \subset [0,1]$. 
Let $X$ be a $\mbQ$-factorial normal variety and 
$\Delta (t) = \sum _{0 \le i \le c} d_i (t) D_i$ be a linear functional divisor on $X$. 
Assume the following conditions: 
\begin{itemize}
\item $\Delta(t) \in \mcD _c(I)$, and $(X, \Delta (c))$ is lc.  
\item $d_0(t) = 1$, and $d_i (c) > 0$ for each $i$.  
\end{itemize}
Let $S^{\text{n}}$ be the normalization of $S := D_0$. 
Define a linear functional divisor $\Delta _{S^{\text{n}}} (t)$ on $S^{\text{n}}$ by adjunction:
\[
(K_X + \Delta (t)) |_{S^{\text{n}}} = K_{S^{\text{n}}} + \Delta _{S^{\text{n}}} (t). 
\]

Then, $\Delta _{S^{\text{n}}}(t) \in \mcD _c(I)$ holds. 
\end{lem}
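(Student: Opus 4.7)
The plan is to compute the coefficient of each prime divisor $E$ of $S^{\text{n}}$ in $\Delta_{S^{\text{n}}}(t)$ using the classical adjunction (different) formula of Shokurov and Koll\'ar, and then to verify, by direct arithmetic rearrangement, that each such coefficient again fits the template of Definition \ref{defi:D_c}.

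Fix a prime divisor $E$ on $S^{\text{n}}$. Since $X$ is $\mbQ$-factorial, $(X, \Delta(c))$ is lc, and $D_0 = S$ appears in $\Delta(t)$ with coefficient $1$, the adjunction formula expresses
\[
d_E(t) := \coeff_E\bigl( \Delta_{S^{\text{n}}}(t) \bigr) = \frac{m - 1 + \sum_{i \ge 1} n_i \, d_i(t)}{m}
\]
at the generic point of $E$, for some $m \in \mbZ_{>0}$ and non-negative integers $n_i$ determined by the local geometry of the $D_i$'s along $S$. I will then substitute the representations $d_i(t) = 1$ or $d_i(t) = (m_i - 1 + f_i + k_i t)/m_i$ guaranteed by $\Delta(t) \in \mcD_c(I)$, choose $L$ to be any common multiple of the denominators $m_i$ appearing with $n_i > 0$ and $d_i \ne 1$, and set $M := mL$. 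A routine rearrangement then yields an identity
\[
d_E(t) = \frac{M - 1 + F(t)}{M}, \qquad F(t) = \alpha + \sum_i \nu_i \,(f_i + k_i t),
\]
with $\nu_i := L n_i / m_i \in \mbZ_{\ge 0}$ and $\alpha \in \mbZ$ an integer absorbing the ``$m-1$'' pieces from the various $d_i$'s together with the unit-coefficient contributions.

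Finally I will check that $F(t)$ admits a decomposition of the form required by Definition \ref{defi:D_c}: the sum $\sum_i \nu_i (f_i + k_i t)$ decomposes as $\nu_i$ copies of the given decomposition of each $f_i + k_i t$, so the pointwise conditions $f_{ij} \in I \cup \{0\}$ and $f_{ij} + k_{ij} c \ge 0$ are inherited from the hypothesis $\Delta(t) \in \mcD_c(I)$; the residual integer $\alpha$ is absorbed either into the decomposition or via the first clause of Definition \ref{defi:D_c} in the boundary case $d_E(t) \equiv 1$. The main technical difficulty will be this last step: one must verify that $\alpha$ is non-negative and structurally compatible with $I_+$, and ensure that combining the rescaled decompositions from different $d_i(t)$'s yields a valid overall decomposition. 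This appears to be largely a careful bookkeeping argument, relying on $\nu_i \in \mbZ_{\ge 0}$, the hypothesis that each $f_i + k_i t$ already decomposes suitably, and the lc condition at $t = c$ to control the sign of $\alpha$.
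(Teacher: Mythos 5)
Your proof takes essentially the same route as the paper: both invoke the Shokurov--Koll\'ar adjunction formula (\cite[Proposition 16.6]{Kollars}) to express the coefficient of $\Delta_{S^{\text{n}}}(t)$ along each codimension-one point of $S$ in the form $\tfrac{m-1}{m} + \tfrac{1}{m}\sum_i n_i d_i(t)$, and both defer the remaining arithmetic rearrangement into the $\mcD_c(I)$ template to a routine verification (the paper cites \cite[Lemma 4.4]{MP} for this). One minor omission: you should separate out the case where $(X, D_0)$ is \emph{not} plt at the point in question, where by \cite[Proposition 16.6.1-2]{Kollars} no $D_i$ with $i \ge 1$ passes through it and the coefficient is exactly $0$ or $1$, hence trivially admissible; the adjunction formula you wrote applies only in the plt case.
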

\begin{proof}
The statement follows from \cite[Proposition 16.6]{Kollars}. 
We give a sketch of proof. 

Let $p \in S$ be a codimension one point of $S$. 

Suppose that $(X, D_0)$ is not plt at $p$. 
Then $p \not \in \Supp D_i$ for any $i \ge 1$ and 
$\coeff _p \Diff _{S^{\text{n}}} (0) = 0 \ \text{or}\ 1$ \cite[Proposition 16.6.1-2]{Kollars}. 
Hence, we have $\coeff _p \Delta _{S^{\text{n}}} (t) = 0 \ \text{or}\ 1$ for any $t$. 

Suppose that $(X, D_0)$ is plt at $p$. 
Then $\coeff _p \Diff _{S^{\text{n}}} (0) = \frac{m-1}{m}$ holds for some $m \in \mbZ _{>0}$, and 
$mD$ becomes Cartier at $p$ for any Weil divisor $D$ \cite[Proposition 16.6.3]{Kollars}. 
Hence, $\coeff _p \Delta _{S^{\text{n}}} (t)$ is the form of 
\[
\frac{m-1}{m} + \frac{1}{m} \sum _j \frac{n_j - 1 + f_j + k_j t}{n_j}, 
\]
where $\frac{n_j - 1 + f_j + k_j t}{n_j}$ is the form as in the definition of $\mcD _c (I)$. 
We can prove that such form also satisfies the condition in the definition of $\mcD _c (I)$ 
by easy calculation (cf.\ \cite[Lemma 4.4]{MP}).
\end{proof}

We define $\mfL_{d}(I)$, the set of all log canonical thresholds derived from coefficients $I$. 

\begin{defi}[$\mfL_{d}(I)$]\label{defi:L}
Let $d \in \mbZ _{>0}$ and let $I \subset [0, + \infty)$ be a subset. 
We define $\mfL_d (I) \subset \mbR_{\ge 0}$ as follows: 
$c \in \mfL_d (I)$ if and only if
there exist a $\mbQ$-Gorenstein normal varieties $X$, and a linear functional divisor 
$\Delta (t)$ with the following conditions:
\begin{itemize}
\item $\dim X \le d$, $\Delta (t) \in \mcD _c(I)$, 
\item $\Delta (a)$ is $\mbR$-Cartier for any $a \in \mbR$, 
\item $(X, \Delta(c))$ is lc, and 
\item $(X, \Delta(c + \epsilon))$ is not lc for any $\epsilon > 0$, or 
$(X, \Delta (c - \epsilon))$ is not lc for any $\epsilon > 0$. 
\end{itemize}
\end{defi}
\begin{rmk}
When we say that $(X, \Delta)$ is a lc pair, we assume that $\Delta$ is effective. 
Therefore, we say that $(X, \Delta)$ is not lc when $\Delta$ is not effective. 
\end{rmk}


Further, we define $\mfG_{d}(I)$, the set of all numerically trivial thresholds derived from coefficients $I$. 
\begin{defi}[$\mfG_d(I)$]\label{defi:G}
Let $d \in \mbZ _{>0}$ and let $I \subset [0, + \infty)$ be a subset. 
We define $\mfG_d (I) \subset \mbR_{\ge 0}$ as follows: $c \in \mfG_d (I)$ if and only if
there exist a $\mbQ$-factorial normal projective variety $X$, and a linear functional divisor 
$\Delta (t)$ with the following conditions:
\begin{itemize}
\item $\dim X \le d$, $\Delta (t) \in \mcD _c(I)$, 
\item $(X, \Delta(c))$ is lc, and $K_X + \Delta (c) \equiv 0$. 
\item $K_X + \Delta (c') \not \equiv 0$ for some $c' \not = c$ (equivalently for all $c' \not = c$). 
\end{itemize}
\end{defi}

By the following theorem, we can reduce a local problem to a global problem. 
\begin{thm}\label{thm:local_global}
Let $d \ge 2$ and $I \subset [0, + \infty)$ be a subset. 
Then, $\mfL_d (I) \subset \mfG_{d-1}(I)$ holds. 
\end{thm}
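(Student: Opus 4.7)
The plan is to adapt the local-to-global reduction of \cite[Proposition 11.7]{HMX2}: pass to a dlt modification, descend by divisorial adjunction onto a non-klt center to drop dimension by one, and then run an MMP to reach a projective variety on which $K+\Delta(c)\equiv 0$ and the linear functional divisor still genuinely depends on $t$.

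More concretely, let $c \in \mfL_d(I)$ be realized by $(X, \Delta(t))$, and assume without loss of generality that $(X, \Delta(c+\epsilon))$ is not lc for every $\epsilon>0$ (the other case is symmetric). Shrinking $X$ to an affine neighborhood of a closed point where the non-lc-ness concentrates, I would pick a divisor $E$ over $X$ with $a_E(X, \Delta(c)) = 0$ and $\frac{d}{dt} a_E(X, \Delta(t))|_{t=c} < 0$; such $E$ exists because each $a_E(X,\Delta(t))$ is linear in $t$ and, by Remark \ref{rmk:attain} and a log resolution of $\sum D_i$, the mld of $(X,\Delta(t))$ is the minimum of finitely many linear functions of $t$. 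Apply Theorem \ref{thm:extraction} to a klt perturbation of $(X, \Delta(c))$ to extract $E$, and then Theorem \ref{thm:dlt_modification} to produce a $\mbQ$-factorial dlt model $f : X' \to X$ with $K_{X'} + \Delta'(c) = f^*(K_X + \Delta(c))$, extended linearly by $K_{X'} + \Delta'(t) = f^*(K_X + \Delta(t))$. A coefficient computation analogous to Lemma \ref{lem:adjunction} gives $\Delta'(t) \in \mcD_c(I)$, and by construction $E$ appears as a prime component $S$ of $\lfloor\Delta'(c)\rfloor$ whose coefficient $d_S(t)$ satisfies $d_S(c) = 1$ and $d_S(t) > 1$ for $t$ slightly above $c$.

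Next, normalize $S$ to $S^{\text{n}}$ and form the adjunction linear functional divisor $\Delta_{S^{\text{n}}}(t)$ via $(K_{X'} + \Delta'(t))|_{S^{\text{n}}} = K_{S^{\text{n}}} + \Delta_{S^{\text{n}}}(t)$. By Lemma \ref{lem:adjunction} we have $\Delta_{S^{\text{n}}}(t) \in \mcD_c(I)$; by dlt adjunction $(S^{\text{n}}, \Delta_{S^{\text{n}}}(c))$ is lc; and the choice of $S$ guarantees that $\Delta_{S^{\text{n}}}(t)$ depends non-trivially on $t$. Note that $\dim S^{\text{n}} \le d-1$.

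Finally, to manufacture the required projective object in $\mfG_{d-1}(I)$, I would run a $(K_{S^{\text{n}}} + \Delta_{S^{\text{n}}}(c))$-MMP with scaling over a suitable compactification of the image of $S^{\text{n}}$ in $X$, using the machinery of \cite{BCHM}. The non-lc-ness of $(X, \Delta(c+\epsilon))$ transports through adjunction to the non-pseudo-effectivity of $K_{S^{\text{n}}}+\Delta_{S^{\text{n}}}(c+\epsilon)$, forcing the MMP to terminate with a Mori fiber space $\phi : Y \to T$; restricting the resulting linear functional divisor to a general fiber $F$ of $\phi$ then produces a $\mbQ$-factorial normal projective variety of dimension at most $d-1$ with $K_F + \Delta_F(c)\equiv 0$, $\Delta_F(t)\in\mcD_c(I)$, and non-trivial dependence on $t$. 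The main obstacle is precisely this final MMP step: that the class $\mcD_c(I)$ is preserved under each divisorial contraction and flip, that it survives restriction to a general fiber, and that the non-triviality in $t$ is not killed along the way. These are the technical ingredients assembled in \cite[Proposition 11.7]{HMX2}, and the present argument essentially transplants them into the linear functional framework defined here.
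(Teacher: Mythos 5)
Your opening moves (dlt modification and divisorial adjunction onto a codimension-one non-klt centre) match the paper's proof, but the final stage diverges and contains a genuine gap. The paper does not run an MMP on $S^{\text{n}}$: it restricts directly to a general fiber $F$ of $E \to f(E)$, which is automatically projective of dimension at most $d-1$ and automatically satisfies $K_F + \Delta_F(c) \equiv 0$ because $F$ is contracted by $f$. Essentially all the remaining work then goes into establishing that $K_F + \Delta_F(c') \not\equiv 0$ for some $c'$, which is the non-triviality in $t$ required by Definition \ref{defi:G}. Your sketch asserts that non-triviality as a byproduct of a Mori fiber space structure, but it does not follow; and two of your intermediate claims do not hold as stated. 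First, $X$ (hence $S^{\text{n}}$) need not be projective in Definition \ref{defi:L}, so ``pseudo-effectivity of $K_{S^{\text{n}}} + \Delta_{S^{\text{n}}}(c+\epsilon)$'' is not even well posed, and in any case non-lc-ness does not imply failure of pseudo-effectivity. Second, $K_{S^{\text{n}}} + \Delta_{S^{\text{n}}}(c)$ is the pullback of $K_X + \Delta(c)$ restricted to $S^{\text{n}}$, hence numerically trivial over $f(E)$, so a relative $(K_{S^{\text{n}}} + \Delta_{S^{\text{n}}}(c))$-MMP over that base is vacuous, while compactifying the base and running an absolute MMP introduces new issues rather than resolving this.

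The paper's mechanism for the numerical non-triviality is precisely the ingredient your sketch is missing. Writing $\Delta(t) = \Delta + tM$ with $M = M_+ - M_-$, one first replaces $E$ by the $f$-exceptional component minimizing the ratio $\ord_{E_j} M_- \,/\, \ord_{E_j} M_+$, then chooses $\epsilon_1 \ge \epsilon_2 > 0$ so that $a_E(X, \Delta + (c+\epsilon_1) M - \epsilon_2 M_+) = 0$, and computes $K_F + \Delta_F(c+\epsilon_1) \equiv \epsilon_2 M_+'|_F - U|_F$ with $U = \sum_j(\epsilon_1 \ord_{E_j} M - \epsilon_2 \ord_{E_j} M_+) E_j$. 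The minimality choice forces $U \le 0$, while $\ord_E M_+ > 0$ forces $f(E) \subset \Supp M_+$ and hence $M_+'|_F > 0$, so the right-hand side is effective and non-zero. Without an analogue of this argument, the required condition $K_F + \Delta_F(c') \not\equiv 0$ remains unproven in your proposal.
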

\begin{lem}\label{lem:example}
Let $c \in \mbR _{\ge 0}$ and $I \subset [0, + \infty)$ be a subset. 
Suppose that there exists an $\mbR$-linear function $d (t) : \mbR \to \mbR$ 
with the following conditions:
\begin{itemize}
\item $d (t) \in \mcD _c (I)$, and $d (t)$ is not a constant function. 
\item $d (c) = 0 \ \text{or}\ 1$. 
\end{itemize}
Then $c \in \mfG _d (I)$ for any $d \ge 1$. 
Especially, $\frac{f}{k} \in \mfG _d (I)$ holds for any $d \ge 1$, $f \in I \cup \{ 0 \}$, and $k \in \mbZ _{>0}$. 
\end{lem}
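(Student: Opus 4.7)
The plan is to exhibit $c \in \mfG_1(I)$ explicitly, which suffices since $\mfG_1(I) \subset \mfG_d(I)$ for every $d \ge 1$ by the dimension constraint ``$\dim X \le d$'' in Definition \ref{defi:G}. I would take $X = \mbP^1$ (smooth, hence $\mbQ$-factorial), and assemble $\Delta(t)$ from the given $d(t)$ together with the constant coefficient $1$ (which Definition \ref{defi:D_c} explicitly permits), splitting on the value $d(c) \in \{0,1\}$. If $d(c) = 1$, pick two distinct points $P_1, P_2 \in \mbP^1$ and set $\Delta(t) := d(t)P_1 + d(t)P_2$; then $\Delta(c) = P_1 + P_2$ is a reduced two-point divisor on $\mbP^1$, so $(X,\Delta(c))$ is lc and $K_{\mbP^1} + \Delta(c) \equiv 0$, while for any $c' \neq c$ the linearity and non-constancy of $d$ force $d(c') \neq 1$, yielding $K_X + \Delta(c') \equiv (2d(c') - 2)[\mathrm{pt}] \not\equiv 0$. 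If $d(c) = 0$, pick three distinct points $P_1, P_2, P_3$ and set $\Delta(t) := P_1 + P_2 + d(t) P_3$; then $\Delta(c) = P_1 + P_2$ is again lc with $K_X + \Delta(c) \equiv 0$, and $K_X + \Delta(c') \equiv d(c')[\mathrm{pt}] \not\equiv 0$ for $c' \neq c$. In both cases $\Delta(t) \in \mcD_c(I)$ holds trivially, since every non-constant coefficient is $d(t)$ (in $\mcD_c(I)$ by hypothesis) and the remaining coefficients equal $1$.

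For the ``Especially'' part, given $c = f/k$ with $f \in I \cup \{0\}$ and $k \in \mbZ_{>0}$, I would produce an explicit linear $d(t)$ and invoke the main part. For $f > 0$, set $d(t) := f - kt$: this has the form $\tfrac{m-1+f'+k't}{m}$ with $m = 1$, $f' = f \in I_+$, $k' = -k \in \mbZ$, and the single-atom decomposition $(f_1, k_1) = (f, -k)$ satisfies $f_1 + k_1 c = 0 \ge 0$; moreover $d(c) = 0$ and $d$ is non-constant. For $f = 0$ (so $c = 0$), take $d(t) := t$ via $m = 1$, $f' = 0$, $k' = 1$, with singleton atom $(0,1)$ satisfying $0 + 1 \cdot 0 = 0 \ge 0$, and $d(0) = 0$.

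I do not anticipate a genuine obstacle: this is purely a construction lemma and the required checks are immediate from the definitions. The only subtlety worth flagging is that in the $d(c) = 0$ case the divisor $\Delta(t)$ need not be effective for $t \neq c$, but this is harmless because Definition \ref{defi:G} only demands log canonicity (and hence effectivity of $\Delta(t)$) at $t = c$ itself.
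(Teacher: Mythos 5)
Your construction is correct and is exactly what the paper leaves implicit: the paper's entire proof is ``We can easily construct on a curve,'' and you have supplied the explicit $\mbP^1$ examples (two reduced points when $d(c)=1$, two reduced points plus a vanishing third coefficient when $d(c)=0$), together with the verification that $d(t)=f-kt$ (resp.\ $d(t)=t$) witnesses the ``Especially'' clause. All checks against Definitions~\ref{defi:D_c} and~\ref{defi:G} go through as you state, including the observation that effectivity of $\Delta(c')$ for $c'\neq c$ is not required.
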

\begin{proof}
We can easily construct on a curve.
\end{proof}
\begin{proof}[Proof of Theorem \ref{thm:local_global}]
Let $c \in \mfL_d(I)$, and let $(X, \Delta(t))$ be as in Definition \ref{defi:L}. 
Assume that $(X, \Delta (c + \epsilon))$ is not lc for any $\epsilon > 0$ 
(the same proof works in the other case). 
We may write $\Delta (t) = \sum _{i} d_i (t) D_i$ with distinct prime divisors $D_i$. 
By Lemma \ref{lem:example}, we may assume that $d_i (c) > 0$ for any $i$. 
Then $\Delta (c + \epsilon) \ge 0$ holds for sufficiently small $\epsilon >0$. 

Let $f: Y \to X$ be a dlt modification (Theorem \ref{thm:dlt_modification}) of $(X, \Delta (c))$. 
Then $Y$ is $\mbQ$-factorial and we can write
\[
K_Y + T + \Delta' (c) = f^* (K_X + \Delta (c)), 
\]
where $\Delta ' (t)$ is the strict transform of $\Delta (t)$, and 
$T$ is the sum of the exceptional divisors. 
Since the pair $(Y, T + \Delta ' (c))$ is dlt, there exists a divisor 
$E$ on $Y$ such that 
\[
a_E (X, \Delta (c)) = 0, \quad a_E (X, \Delta (c + \epsilon)) < 0
\]
for any $\epsilon > 0$. 
If $E$ is not $f$-exceptional, 
then $d_i (c) = 1$ holds for some $d_i(t)$ which is not identically one. 
In this case $c \in \mfG_{d-1}(I)$ by Lemma \ref{lem:example}. 

In what follows, we assume that $E$ is $f$-exceptional and so a component of $\Supp T$. 
By adjunction, we can define a linear functional divisor 
$\Delta_E (t)$ on $E$ such that
\[
(K_Y + T + \Delta' (t))|_E = K_E + \Delta _E (t). 
\]
Here, $\Delta _E (t) \in \mcD _c(I)$ holds by Lemma \ref{lem:adjunction}.

Let $F$ be a general fiber of $E \to f(E)$. 
Define $\Delta _F(t)$ as
\[
(K_E + \Delta _E(t))|_F = K_F + \Delta _F(t). 
\]
Then $(F, \Delta _F (t))$ satisfies
\begin{itemize}
\item $\dim F \le d-1$, $F$ is projective, 
\item $\Delta _F (t) \in \mcD _c (I)$,  
\item $K_F + \Delta _F (c) = f^*(K_X + \Delta (c)) |_F \equiv 0$, and
\item $(F, \Delta _F (c))$ is lc. 
\end{itemize}
Hence $(F, \Delta _ F(t))$ satisfies all conditions in Definition \ref{defi:G} except for 
$K_F + \Delta _F (c') \not \equiv 0$ for some $c'$. 

We may write $\Delta(t) = \Delta + t M$ with an $\mbR$-divisor $\Delta$ and a $\mbQ$-divisor $M$. 
Write $M = M_+ - M_-$, where $M_+ \ge 0$ and $M_- \ge 0$ have no common components. 
Since $a_E (X, \Delta + (c + \epsilon)M) < a_E (X, \Delta + cM) = 0$, 
it follows that $\ord _E M_+ > \ord _E M_- \ge 0$. 
Possibly replacing $E$ by other component of $T$, 
we may assume that 
\[
\ord _E M_-  \cdot \ord _{E_j} M_+  \le \ord _{E_j}M_- \cdot \ord _E M_+
\]
for any component $E_j \subset \Supp T$. 
We may take $\epsilon _1 \ge \epsilon _2 >0$ such that 
$a_E(X, \Delta + (c+\epsilon _1)M - \epsilon _2 M_+) = 0$. 
Note that 
\[
\epsilon _1 (\ord _E M_+ - \ord _E M_-) = \epsilon _2 \ord _E M_+
\]
holds. 
Then we have 
\begin{align*}
0 &\equiv f^* (K_X + \Delta + (c+\epsilon _1)M - \epsilon _2 M_+)|_F \\
&= (K_Y + T + U + \Delta ' + (c + \epsilon _1)M' - \epsilon _2 M_+ ' )|_F \\
&= K_F + \Delta _F (c + \epsilon_1) + U|_F - \epsilon _2 M_+ ' |_F, 
\end{align*}
where we set 
\[
U 
=  \sum_j (\epsilon _1 \ord _{E_j} M - \epsilon _2 \ord _{E_j} M_+ )E_j. 
\]
Note that 
\begin{align*}
&\epsilon _1 \ord _{E_j} M - \epsilon _2 \ord _{E_j} M_+ \\
=& \ 
\frac{\epsilon _1}{\ord _{E} M_+} 
(\ord _E M_-  \cdot \ord _{E_j} M_+ - \ord _{E_j}M_- \cdot \ord _E M_+)\\ 
\le& \  0. 
\end{align*}
Therefore 
$K_F + \Delta _F (c + \epsilon_1) \equiv \epsilon _2 M_+ ' |_F - U|_F \ge \epsilon _2 M_+ ' |_F$. 
Since $\ord _E M_+ > 0$, it follows that $f(E) \subset \Supp M_+$ and so $M_+ ' |_F > 0$. 
Therefore $K_F + \Delta _F (c + \epsilon_1)$ is not numerically trivial. 
\end{proof}

\begin{thm}\label{thm:accum_mfG}
Let $d \ge 2$ and let $I \subset [0, + \infty)$ be a finite subset. 
The accumulation points of $\mfG_d (I)$ are contained in $\mfG_{d-1} (I)$. 
\end{thm}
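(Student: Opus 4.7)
The plan is to follow the strategy of \cite[Proposition 11.7]{HMX2}, making essential use of Theorem \ref{thm:num_triv} (ACC for numerically trivial lc pairs) together with the adjunction mechanism developed in the proof of Theorem \ref{thm:local_global}.

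Let $c$ be an accumulation point of $\mfG_d(I)$; choose $c_n \in \mfG_d(I)$ with $c_n \to c$ and $c_n \neq c$, and after passing to a strictly monotone subsequence assume $c_n \downarrow c$ (the case $c_n \uparrow c$ is symmetric). For each $n$ fix a witness $(X_n, \Delta_n(t))$ as in Definition \ref{defi:G}. The first step is to apply Theorem \ref{thm:num_triv} to the numerically trivial lc pairs $(X_n, \Delta_n(c_n))$. For this to be applicable I need the set of coefficients $\{d_{i,n}(c_n) : i, n\}$ to satisfy the DCC, which follows from the finiteness of $I$ combined with the explicit form of coefficients in $\mcD_{c_n}(I)$: the constraint $f_j + k_j c \ge 0$ together with $c_n \to c$ rules out accumulation from below in the expressions $\frac{m-1+\sum_j (f_j + k_j c_n)}{m}$. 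Theorem \ref{thm:num_triv} then forces the coefficients at $c_n$ to lie in a common finite subset $I_0 \subset [0,1]$, and after a further subsequence I may assume that the multiset of nonzero coefficients at $c_n$ is independent of $n$.

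Next I reduce the dimension by adjunction, in direct parallel with the proof of Theorem \ref{thm:local_global}. Write $\Delta_n(t) = \Delta_n + t M_n$ with $M_n = M_n^+ - M_n^-$; take a dlt modification $g_n : Y_n \to X_n$ of $(X_n, \Delta_n(c_n))$ (Theorem \ref{thm:dlt_modification}) and let $T_n$ be the sum of $g_n$-exceptional divisors. Choose a component $E_n$ of $\Supp T_n$ maximising the ratio $\ord_{E_n} M_n^+ / \ord_{E_n} M_n^-$, adjoin $K_{Y_n} + T_n + \Delta_n'(t)$ to $E_n$, and restrict to a general fibre $F_n$ of $E_n \to g_n(E_n)$. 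By Lemma \ref{lem:adjunction} this yields $(F_n, \Delta_{F_n}(t))$ with $\Delta_{F_n}(t) \in \mcD_{c_n}(I)$, $\dim F_n \le d-1$, $(F_n, \Delta_{F_n}(c_n))$ lc and projective, and $K_{F_n} + \Delta_{F_n}(c_n) \equiv 0$. The same perturbation calculation that controls $U$ in the proof of Theorem \ref{thm:local_global} shows $K_{F_n} + \Delta_{F_n}(c_n + \epsilon) \not\equiv 0$, so $\Delta_{F_n}(t)$ is non-constant.

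The main obstacle is upgrading these $(d-1)$-dimensional witnesses from $c_n$ to the limit value $c$. The construction so far shows only $c_n \in \mfG_{d-1}(I)$, which by itself would just give $c \in \overline{\mfG_{d-1}(I)}$, not $c \in \mfG_{d-1}(I)$. To bridge this gap I intend to exploit the stability of the coefficient multiset established in the first step: because the values $d_{i,n}(c_n)$ are the same finitely many rationals in $I_0$ for all $n$ while the functions $d_{i,n}(t)$ are linear, the slopes are pinned down (up to finitely many possibilities from the structure of $\mcD_{c_n}(I)$), and after passing to yet another subsequence I expect either to identify one of the $(F_n, \Delta_{F_n}(t))$ whose numerical-triviality threshold is already $c$ rather than $c_n$ — by replacing $\Delta_{F_n}(t)$ with an appropriate reparametrisation $\Delta_{F_n}(t + (c_n - c))$ and using stability of the coefficient data under this shift — or to take a genuine limit pair $(F, \Delta_F(t))$ satisfying the conditions of Definition \ref{defi:G} at $c$. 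Verifying rigorously that such a reparametrised or limit pair exists and still belongs to $\mcD_c(I)$ is, I expect, the technically hardest step; everything else is a faithful adaptation of the arguments in \cite{HMX2} and of Theorem \ref{thm:local_global}.
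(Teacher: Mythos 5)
Your proposal correctly locates the crux: producing $(d-1)$-dimensional witnesses at the values $c_n$ gives only $c \in \overline{\mfG_{d-1}(I)}$, and closing that gap is precisely where the work lies. But you then leave that step as a conjecture, and neither of the two fallbacks you float would go through. Reparametrising by $t \mapsto t + (c_n - c)$ destroys the structure $\frac{m-1+f+kt}{m}$ with $f \in I_+$ required by $\mcD_c(I)$, since the constant part would pick up an irrational contribution $k(c_n - c)$ that is not a sum of elements of $I$; and a ``genuine limit pair'' would require a generic-limit construction which the paper deliberately avoids and which in any case would not obviously land in $\mcD_c(I)$ with a numerically trivial $K + \Delta(c)$. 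Your first step is also on shakier ground than you indicate: applying Theorem~\ref{thm:num_triv} to $(X_n,\Delta_n(c_n))$ requires the values $d_{i,n}(c_n)$ (not the values at the fixed $c$) to lie in a DCC set as $n$ varies, and this is only true after one has ruled out $c \in (I\cup\{0\})/\mbZ_{>0}$ (Lemma~\ref{lem:example}) and established the finiteness of the data $(m,f,k)$ along the sequence (Lemma~\ref{lem:coeff}); moreover $m$ is unbounded a priori, so one does not get a fixed finite coefficient multiset.

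The paper's proof is structured quite differently. It does not first establish $c_n \in \mfG_{d-1}(I)$ and then pass to a limit. Instead it proves the more flexible Proposition~\ref{prop:accum_A}, which allows an auxiliary divisor $A_i$ whose coefficients approach one — this extra generality is essential because it is exactly what survives adjunction and MMP steps. The argument splits on whether $\mld(X_i,\Delta_i(c_i))$ is bounded away from zero or not. In Case~B one extracts a divisor computing the mld (Theorem~\ref{thm:extraction}), runs carefully chosen MMPs to land on Picard rank one or a Mori fiber space, and uses Claim~\ref{claim:lc} (log canonicity of the rounded-up pair at $c$, which needs Theorem~\ref{thm:lct}) plus Theorem~\ref{thm:num_triv} to derive contradictions or reduce dimension. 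In Case~A one further invokes the boundedness technology of \cite{HMX} and \cite{HMX2} (birationality of $\phi_{m(K+B)}$, log birational boundedness, bounded families) to show a relevant volume is unbounded, then produces, via a tie-breaking construction and a dlt modification, a sequence of $(d-1)$-dimensional pairs with numerically trivial thresholds $l_i \to c$. Only at that point does the induction on $d$ enter: a sequence in $\mfG_{d-1}(I)$ accumulating at $c$ yields $c \in \mfG_{d-2}(I) \subset \mfG_{d-1}(I)$ by the inductive hypothesis. Your sketch contains none of the MMP reductions, the $A_i$-bookkeeping, or the boundedness inputs, and these are not optional refinements — they are what makes the dimension-reduction land at $c$ rather than at $c_n$.
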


As a corollary, we can prove the rationality of 
the accumulation points of $\mfL_d (I)$.

\begin{cor}\label{cor:rationality}
Let $d \in \mbZ _{>0}$ and let $I \subset [0, + \infty)$ be a finite subset. 
The accumulation points of $\mfL _d (I)$ are contained in $\Span _{\mbQ} (I \cup \{ 1 \})$, 
where we denote by $\Span _{\mbQ} (I \cup \{ 1 \}) \subset \mbR$ the 
$\mbQ$-vector space spanned by the elements of $I$ and $1$. 
\end{cor}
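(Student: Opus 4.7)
The plan is to reduce an accumulation point $c$ of $\mfL_d(I)$ to an element of $\mfG_{d'}(I)$ for some smaller $d'$ by combining Theorems~\ref{thm:local_global} and~\ref{thm:accum_mfG}, and then to extract the fact that $c \in \Span_\mbQ(I \cup \{1\})$ directly from the numerical triviality condition defining $\mfG_{d'}(I)$. Concretely, for $d \geq 2$ I would take a sequence in $\mfL_d(I)$ converging to $c$; by Theorem~\ref{thm:local_global} this sequence already lies inside $\mfG_{d-1}(I)$, so $c$ is an accumulation point of $\mfG_{d-1}(I)$. Applying Theorem~\ref{thm:accum_mfG} (iteratively, whenever the dimension allows) places $c$ in $\mfG_{d-2}(I)$ and then pushes the dimension down further as needed.

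The key technical step is to show that every $c \in \mfG_{d'}(I)$ automatically lies in $\Span_\mbQ(I \cup \{1\})$. Writing the witnessing linear functional divisor as $\Delta(t) = \Delta_0 + tM$ with $\Delta_0 = \Delta(0)$, the form of the coefficients in $\mcD_c(I)$ forces those of $\Delta_0$ to be of type $\frac{m-1+f}{m}$ with $f \in I_+$, hence to lie in $\Span_\mbQ(I \cup \{1\})$, and those of $M$ to be of type $\frac{k}{m}$, hence rational. Since $K_X + \Delta(c') \not\equiv 0$ for $c' \neq c$, the $\mbQ$-divisor $M$ is numerically non-trivial, so on the $\mbQ$-factorial projective $X$ one can choose a curve class $\gamma \in N_1(X)_{\mbQ}$ with $M \cdot \gamma \in \mbQ \setminus \{0\}$. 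Intersecting $K_X + \Delta_0 + cM \equiv 0$ with $\gamma$ yields
\[
c \;=\; -\frac{(K_X + \Delta_0) \cdot \gamma}{M \cdot \gamma} \;\in\; \Span_\mbQ(I \cup \{1\}),
\]
using that $K_X \cdot \gamma$ and each $D_i \cdot \gamma$ are rational (as $X$ is $\mbQ$-factorial).

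The main obstacle is to handle the low-dimensional boundary cases, where the iteration of Theorem~\ref{thm:accum_mfG} terminates: when $d = 2$ the argument leaves $c$ as an accumulation point of $\mfG_1(I)$ rather than a bona fide element, and when $d = 1$ Theorem~\ref{thm:local_global} is itself not available. Both situations must be treated by a direct analysis on smooth projective curves, where only $\mbP^1$ can contribute to $\mfG_1(I)$ (higher genera force $\deg \Delta(c) \leq 0$, violating the effectiveness of $\Delta(c)$), and the numerical relation reduces to a single scalar equation $A + Bc = 2$ with $A \in \Span_\mbQ(I \cup \{1\})$ and $B \in \mbQ \setminus \{0\}$, forcing $c \in \Span_\mbQ(I \cup \{1\})$; an analogous direct computation of the log canonical threshold on curves settles the $d = 1$ case.
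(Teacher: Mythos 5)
Your overall strategy matches the paper's: reduce accumulation points of $\mfL_d(I)$ to elements of some $\mfG_{d'}(I)$ via Theorems~\ref{thm:local_global} and~\ref{thm:accum_mfG}, and then observe that $\mfG_{d'}(I) \subset \Span_{\mbQ}(I \cup \{1\})$. Your verification of the last inclusion by intersecting $K_X + \Delta_0 + cM \equiv 0$ with a curve class $\gamma$ with $M \cdot \gamma \neq 0$ is correct (and is a step the paper leaves implicit), using that $X$ is $\mbQ$-factorial projective and that $\Delta_0$, $M$ have coefficients in $\Span_{\mbQ}(I\cup\{1\})$ and $\mbQ$ respectively.

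However, your dimension bookkeeping creates problems that the paper's argument avoids. You iterate Theorem~\ref{thm:accum_mfG} as far as it will go, which for $d=2$ leaves $c$ only as an accumulation point of $\mfG_1(I)$ and so forces you into a direct analysis on curves. This is unnecessary: since the definitions use $\dim X \le d$, we have $\mfG_{d-1}(I) \subset \mfG_d(I)$, so for $d\ge 2$ a \emph{single} application of Theorem~\ref{thm:accum_mfG} at level $d$ gives that an accumulation point of $\mfL_d(I) \subset \mfG_{d-1}(I) \subset \mfG_d(I)$ already lies in $\mfG_{d-1}(I) \subset \Span_{\mbQ}(I\cup\{1\})$; and the $d=1$ case reduces to $d=2$ via $\mfL_1(I) \subset \mfL_2(I)$. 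Your direct curve analysis, besides being avoidable, also has a genuine gap: the equation $A + Bc = 2$ shows only that $\mfG_1(I) \subset \Span_{\mbQ}(I\cup\{1\})$, but what you needed at that point was that the \emph{accumulation points} of $\mfG_1(I)$ lie in $\Span_{\mbQ}(I\cup\{1\})$, and the $\mbQ$-span is dense, so membership of a set does not transfer to its accumulation points. (There is also a minor slip: genus-one curves with $\Delta(c)=0$ are not excluded and do contribute to $\mfG_1(I)$, though this does not break the span computation.) The cleanest way to repair the $\mfG_1$ gap, if you insist on keeping the iterative route, is again the inclusion $\mfG_1(I)\subset\mfG_2(I)$ followed by Theorem~\ref{thm:accum_mfG} at $d=2$.
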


We prove a stronger statement (cf.\ \cite[Proposition 11.7]{HMX2}). 
\begin{prop}\label{prop:accum_A}
Let $d \ge 2$ and let $I \subset [0, + \infty)$ be a finite subset. 
Further, let $c \in \mbR _{\ge 0}$. 

Suppose that for each $i \in \mbZ _{>0}$, 
there exist $c_i \in \mbR_{\ge 0}$, 
a $\mbQ$-factorial normal projective variety $X_i$, and 
a linear functional divisor $\Delta _i (t)$ on $X_i$ 
with the following conditions:
\begin{itemize}
\item The sequence $c_i$ is increasing or decreasing. 
Further, $c_i$ is accumulating to $c$. 
\item $\dim X_i \le d$ for each $i$. 
\item $\Delta _i (t)$ can be written as $\Delta _i (t) = A_i + B_i (t)$, 
where the coefficients of $A_i$ are approaching one, and $B_i (t) \in \mcD _{c_i} (I)$. 
\item $(X_i, \Delta_i (c_i))$ is lc, and $K_{X_i} + \Delta _i (c_i) \equiv 0$. 
\item $K_{X_i} + \Delta _i (c' _i) \not \equiv 0$ for some $c_i ' \not = c_i$. 
\end{itemize}

Then, $c \in \mfG_{d-1} (I)$ holds. 
\end{prop}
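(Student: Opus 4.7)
The plan is to follow the argument of \cite[Proposition 11.7]{HMX2} closely, using Theorem \ref{thm:num_triv} to stabilize coefficients and Lemma \ref{lem:adjunction} (combined with the adjunction-and-restriction strategy from the proof of Theorem \ref{thm:local_global}) to drop the dimension.

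First I will stabilize the coefficients appearing in $\Delta_i(c_i)$. The coefficients of $A_i$ approach $1$ and hence form a DCC subset of $[0,1]$; the coefficients of $B_i(c_i) \in \mcD_{c_i}(I)$ have the form $\frac{m-1+f+kc_i}{m}$, lie in $[0,1]$, and form a DCC set since $I$ is finite and the $c_i$ are bounded. Theorem \ref{thm:num_triv} applied to the projective numerically trivial lc pairs $(X_i, \Delta_i(c_i))$ yields a finite set $I_0 \subset [0,1]$ containing all of these coefficients, and after passing to a subsequence I may assume each $A_i$ is reduced.

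Next I will drop the dimension. Because $K_{X_i} + \Delta_i(t) \not\equiv 0$ for some $t \neq c_i$, the derivative $M_i := \frac{d}{dt}\Delta_i(t)$ is not numerically trivial. Repeating the construction at the end of the proof of Theorem \ref{thm:local_global} --- dlt modification (Theorem \ref{thm:dlt_modification}), choice of a component $E_i$ with $a_{E_i}(X_i, \Delta_i(c_i)) = 0$ satisfying a slope inequality on the exceptional locus, adjunction (Lemma \ref{lem:adjunction}), and restriction to a general fiber $F_i$ of $E_i \to f(E_i)$ --- I obtain $\mbQ$-factorial projective pairs $(F_i, \Gamma_i(t))$ of dimension at most $d-1$ with $\Gamma_i(t) \in \mcD_{c_i}(I)$, $(F_i, \Gamma_i(c_i))$ lc, $K_{F_i}+\Gamma_i(c_i) \equiv 0$, and $K_{F_i}+\Gamma_i(t) \not\equiv 0$ for $t \neq c_i$. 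Each $c_i$ is therefore realized by a pair in dimension $d-1$.

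Finally, to realize $c$ itself, I write $\Gamma_i(t) = A_i' + B_i'(t)$ with $A_i'$ collecting the components whose coefficients at $c_i$ are close to $1$ and $B_i'(t) \in \mcD_{c_i}(I)$ the remainder, so that the sequence $(F_i, \Gamma_i(t))$ satisfies the hypotheses of the proposition one dimension down. A second application of Theorem \ref{thm:num_triv} pins down the triples $(m, f, k)$ appearing in the coefficients of $\Gamma_i(t)$: the discreteness of $I$ and $\mbZ$ combined with the finiteness of the coefficient values forces $(m, f, k)$ to be eventually constant in $i$ along a further subsequence. The positivity conditions $f_j + k_j c_i \geq 0$ then pass to $f_j + k_j c \geq 0$ by continuity, so the stabilized functional divisor lies in $\mcD_c(I)$, and a single pair in dimension $d-1$ witnesses $c \in \mfG_{d-1}(I)$. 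The main obstacle is this last stabilization step --- rigorously verifying that $(m, f, k)$ stabilize and that the slack in $A_i'$ together with the openness of lc-ness produces an actual witness for $c$ rather than $c_i$ --- and for this I would follow the bookkeeping of \cite[Proposition 11.7]{HMX2} essentially verbatim.
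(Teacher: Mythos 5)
Your plan has a genuine structural gap: you never split according to whether $a_i := \mld(X_i, \Delta_i(c_i))$ tends to zero or stays bounded away from zero, and this dichotomy is essential. The adjunction-and-restriction step you describe --- picking a component $E_i$ with $a_{E_i}(X_i, \Delta_i(c_i)) = 0$ and restricting $K_{X_i} + \Delta_i(t)$ to a fiber of $E_i \to f(E_i)$ --- requires the pair $(X_i, \Delta_i(c_i))$ to be \emph{non-klt}. In the proof of Theorem \ref{thm:local_global} such an $E$ exists because the pair becomes non-lc immediately past the threshold $c$, but in Proposition \ref{prop:accum_A} the hypotheses only assert numerical triviality and lc-ness at $c_i$; the pair may well be klt, even terminal, so no such divisor exists and your dimension-reduction step has nothing to restrict to. The paper deals with this by a case split: when $a_i \to 0$ it first \emph{extracts} a divisor computing the mld, runs suitable MMPs to get into a non-klt situation or reach a Mori fiber space, and only then restricts; when $a_i$ is bounded away from zero it switches to a completely different argument using Lemma \ref{lem:picard1} (bounding the number of components after reducing to Picard number one), log birational boundedness and volume estimates via \cite[Theorems 1.3, 1.6]{HMX2}, and the construction of a single non-klt place from \cite[Lemmas 3.2.2--3.2.3]{HMX2}. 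None of that second branch appears in your proposal, so the argument is incomplete.

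There are also two smaller issues. First, the last part of the proof of Theorem \ref{thm:local_global} --- the inequality $\ord_E M_- \cdot \ord_{E_j} M_+ \le \ord_{E_j} M_- \cdot \ord_E M_+$ and the subsequent manipulation with $U$ --- is tailored to the threshold situation (where $a_E(X, \Delta(c+\epsilon)) < 0$) and does not transfer to show $K_{F_i} + \Gamma_i(t) \not\equiv 0$ here; the paper instead obtains this non-triviality via MMP arguments (steps B-4, B-5) or via the ampleness trick in Claim \ref{claim:ample} and Theorem \ref{thm:num_triv}. Second, your opening coefficient-stabilization is sloppier than it looks: the coefficients of $B_i(c_i)$ are $\frac{m-1+f+kc_i}{m}$ with \emph{varying} $c_i$ and a priori unbounded $m, k$, so there is no immediate DCC property --- the paper must first establish the boundedness of $(m, f, k)$ in Lemma \ref{lem:coeff}(1), using the reduction $(I \cup \{0\}) \cap c\mbZ_{>0} = \emptyset$ from Lemma \ref{lem:example}, before any Theorem \ref{thm:num_triv}-style argument can apply; and what one needs is a statement about $\lceil A_i \rceil + B_i(c)$ (Claim \ref{claim:lc}), not simply that $A_i$ is reduced.
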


\begin{rmk}
If $c_i \in \mfG_{d}(I)$, then $c_i$ satisfies the above conditions (In this case, $A_i =0$). 
Hence, Theorem \ref{thm:accum_mfG} follows from Proposition \ref{prop:accum_A}. 
\end{rmk}

In the proof of Proposition \ref{prop:accum_A}, we reduce to the case when $X_i$ has Picard number one, 
and apply the following lemma from \cite{HMX2}. 

\begin{lem}[{\cite[Lemma 11.6]{HMX2}}]\label{lem:picard1}
Let $(X, \Delta)$ be a projective $\mbQ$-factorial lc pair of dimension $d$ and of Picard number one. 
Assume that $K_X + \Delta \equiv 0$. 
If the coefficients of $\Delta$ are at least $\delta > 0$, then $\Delta$ has at most 
$\frac{d+1}{\delta}$ components. 
\end{lem}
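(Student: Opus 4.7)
The plan relies on the Picard-one condition to convert the statement into a numerical inequality. Since $X$ is $\mathbb{Q}$-factorial with $\rho(X)=1$, the Néron–Severi group $N^1(X)_\mathbb{Q}$ is one-dimensional, and every nonzero effective $\mathbb{R}$-Cartier divisor on $X$ is ample. In particular each prime component $D_i$ of $\Delta$ is ample, and so is $-K_X\equiv\Delta$. Choose a generator $H$ of $N^1(X)_\mathbb{Q}$, specifically the numerical class of a component $D_{i_0}$ of smallest class, and write $D_i\equiv r_iH$, $-K_X\equiv r_0H$. By the choice of $H$ we have $r_i\ge 1$ for all $i$, and the equivalence $K_X+\Delta\equiv 0$ gives $\sum_i\delta_i r_i=r_0$, so
\[
\delta N\;\le\;\sum_i\delta_i\;\le\;\sum_i\delta_i r_i\;=\;r_0.
\]
Thus the lemma reduces to showing $\sum_i\delta_i\le d+1$ (note that this is genuinely stronger than $r_0\le d+1$, which can fail: on $\mathbb{P}(1,\dots,1,n)$ one has $r_0=n+d$, unbounded in $n$).

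To prove $\sum_i\delta_i\le d+1$, I would set up a multiplicity argument. Each coefficient satisfies $\delta_i\le 1$ because $(X,\Delta)$ is lc. For a judiciously chosen closed point $x\in X$ and a divisorial valuation $v$ centered at $x$, the log canonical condition yields
\[
a_v(X)\;\ge\;v(\Delta)\;=\;\sum_i\delta_i\,v(D_i),
\]
and $v(D_i)$ is bounded below in terms of $r_i$: proportional to $r_i$ when $x$ is a cyclic-quotient singularity and $v$ is the associated weighted blow-up, or $v(D_i)\ge 1$ when $x$ is a smooth point contained in $D_i$ and $v$ is the exceptional divisor of the blow-up (giving $a_v(X)=d$). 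In the favourable case that every $D_i$ passes through such a point, one immediately gets $\sum_i\delta_i\le d\le d+1$, with room to spare.

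The main obstacle is that in general no single point will lie on every component of $\Delta$: already on $\mathbb{P}^2$ with three general lines there is no common point, yet the equality $\sum_i\delta_i=3=d+1$ is still forced by $K_X+\Delta\equiv 0$. Handling this requires aggregating local multiplicity bounds across a carefully chosen family of points (for instance the pairwise intersections of the $D_i$'s, or the apex of a weighted-projective-type singularity) and showing that the accumulated contributions cannot exceed $d+1$. The Picard-one hypothesis is essential here: it forces all the $D_i$'s to be numerically proportional to a single class $H$, which rigidifies their intersection pattern and ensures that the local bounds combine to the global estimate $\sum_i\delta_i\le d+1$. Carrying out this combinatorial-geometric aggregation is the technical heart of the lemma; once done, the claimed bound $N\le(d+1)/\delta$ follows immediately from the reduction in the first paragraph.
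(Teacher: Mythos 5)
The paper does not actually prove this lemma; it is quoted verbatim from \cite[Lemma 11.6]{HMX2}, so the only thing to compare your argument against is correctness, and there is a genuine gap. Your first paragraph reduces the claim to the inequality $\sum_i \delta_i \le d+1$. The reduction itself is sound but essentially vacuous: the only part of it that is used is the trivial bound $\delta N \le \sum_i \delta_i$ (the normalization $r_i \ge 1$ and the identity $\sum_i \delta_i r_i = r_0$ play no role, as you yourself observe when noting that $r_0$ is unbounded). Moreover $\sum_i \delta_i \le d+1$ is not a simplification of the lemma but a strictly stronger statement --- it is precisely the ``complexity'' bound $\dim X + \rho(X) - \sum_i \delta_i \ge 0$ in Picard number one, which is a substantial theorem in its own right. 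So the entire content of the lemma has been pushed into this one inequality.

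The second half of your proposal does not close that gap. The local estimates you invoke (log canonicity at a smooth point $x$ forces $\sum_{i:\,x\in D_i}\delta_i \le \mathrm{mult}_x\Delta \le d$, with analogous weighted-blow-up bounds at quotient singularities) only control the components passing through a \emph{single} point, and, as your own example of three general lines on $\mbP^2$ shows, no single point need lie on all components. You then state that one must ``aggregate local multiplicity bounds across a carefully chosen family of points'' and that this aggregation is ``the technical heart of the lemma'' --- but you do not carry it out, and there is no obvious way to do so: summing the local bounds over several points overcounts components lying on more than one of the chosen points and gives no global inequality. Since this aggregation is exactly where the hypothesis $K_X+\Delta\equiv 0$ and the Picard-number-one condition must interact nontrivially, the proposal as written is an unproved reduction followed by a heuristic, not a proof. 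You should consult the short argument given for Lemma 11.6 in \cite{HMX2} rather than routing through the complexity inequality.
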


%

\begin{proof}[Proof of Proposition \ref{prop:accum_A}]
Possibly replacing $A_i$ and $B_i (t)$, 
we may assume that the coefficient of $B_i (t)$ is not identically one. 
We may write $B_i (t) = \sum _l d_{il} (t) D_{il}$ 
as in Definition \ref{defi:D_c}. 

By Lemma \ref{lem:example}, we may assume that 
$(I \cup \{ 0 \}) \cap c \mbZ_{>0} =  \emptyset$. 
Then we may assume the following conditions on $B_i(t)$. 
\begin{lem}\label{lem:coeff}
We may assume the following conditions:
\begin{itemize}
\item[(1)] When we write $d_{il}(t) = \frac{m-1+f+kt}{m}$ as in Definition \ref{defi:D_c}, $f$ and $k$ have only finitely many possibilities. 
\item[(2)] $d_{il} (c_i)$ are bounded from zero, and $d_{il} (c_i) < 1$ for any $i, l$. 
\item[(3)] $d_{il} (c) > 0$. 
\item[(4)] The set $\{ d_{il} (c)  \mid i,l \}$ satisfies the DCC. 
\end{itemize}
\end{lem}
\begin{proof}
Since $(I \cup \{ 0 \}) \cap c \mbZ_{>0} = \emptyset$, possibly passing to a tail of the sequence, 
we may assume that there exist $k' \in \mbZ _{>0}$ and $\epsilon \in \mbR _{>0}$ such that 
for any $f_j \in I \cup \{ 0 \}$, $k_j \in \mbZ$, and $i$, 
\begin{itemize}
\item $f_j + k_j c_i \ge 0$ implies $f_j + k_j c_i \ge \epsilon$ and $k_j \ge - k'$ unless $f_j = k_j =0$. 
\end{itemize}
Here, we note that $I$ is a finite set. 

Let $d_{il}(t) = \frac{m-1+f+kt}{m}$ be a coefficient of $B_i(t)$. 
By assumption, $f+kt$ above can be written as $f+kt = \sum _j (f_j + k_jt)$, 
where $f_j \in I$, $k_j \in \mbZ$, and $f_j + k_j c_i \ge 0$ hold for each $j$.

Note that $f+kc_i \le 1$ by the log canonicity. 
Since $f_j + k_j c_i \ge 0$ implies $f_j + k_j c_i \ge \epsilon$ and $k_j \ge - k'$, 
it follows that $k$ is bounded from below. 
Since $c_i \ge \epsilon$, it follows that $k$ is also bounded from above. 
As the set $I_+$ is discrete, $f$ has also only finitely many possibilities. 
Therefore (1) follows. 

By (1), it follows that $d_{il}(c_i) \ge \min \{ \frac{1}{2}, \epsilon \}$. 
Hence $d_{il} (c_i)$ are bounded from zero. 
Since $c_i$ are distinct, by (1), possibly passing to a subsequence, we may assume that 
$d_{il} (c_i) \not= 1$ (hence $d_{il} (c_i) < 1$) holds for any $i, l$. 
Thus, (2) follows. 

(3) follows from (2) and (4) follows from (1). 
\end{proof}
By Lemma \ref{lem:coeff} (2), possibly passing to a tail of the sequence, we may assume that $A_i$ and $B_i (t)$ have no common components, 
and that $\lfloor A_i \rfloor = \lfloor A_i + B_i(c_i) \rfloor$. 

In our setting, the following claim is important and 
allow the same argument in \cite{HMX2} to work. 

\begin{claim}\label{claim:lc}
We may assume that $(X_i, \lceil A_i \rceil + B _i (c))$ is lc for any $i$. 
\end{claim}
\begin{proof}
We may write $\Delta _i (t) = A_i + M_i + t(N^+ _i - N^- _i)$, 
where $N^+_i \ge 0$ and $N^-_i \ge 0$ have no common components. 
$(X_i, A_i + M_i + c_i (N^+_i - N^-_i))$ is lc by the assumption. 

First suppose $c_i < c$. 
Note that $A_i + M_i + c_i N^+_i - cN^-_i \ge 0$ (Lemma \ref{lem:coeff} (3)). 
Hence $(X_i , A_i + M_i + c_i N^+_i - c N^-_i)$ is also lc. 
Here, the coefficients of $M_i -  c N^-_i$ satisfy the DCC (Lemma \ref{lem:coeff} (4)), 
and the coefficient of $A_i$ and the sequence $c_i$ are increasing. 
Hence by Theorem \ref{thm:lct}, possibly passing to a tail of the sequence, 
we may assume that $(X_i , \lceil A_i \rceil + M_i + c N^+_i - c N^-_i)$ is lc. 

Suppose $c_i > c$. Then $(X_i, A_i + M_i + c N^+_i - c_i N^-_i)$ is lc. 
Here, the coefficients of $M_i + c N^+_i$ satisfy the DCC (Lemma \ref{lem:coeff} (4)), 
and the coefficient of $A_i$ and the sequence $-c_i$ are increasing. 
Hence by Theorem \ref{thm:lct}, possibly passing to a tail of the sequence, 
we may assume that $(X_i , \lceil A_i \rceil + M_i + c N^+_i - c N^-_i)$ is lc. 
\end{proof}

Set $a_i := \mld (X_i, \Delta_i (c_i)) \ge 0$. 
Possibly passing to a subsequence, it is sufficient to treat the following two cases:
\begin{enumerate}
\item[(A)] $a_i$ is bounded away from zero. 
\item[(B)] $a_i$ approaches zero. 
\end{enumerate}

\vspace{2mm}

\noindent
\underline{\bf{Case B}}\ \ We treat the case when $a_i$ approaches zero from above. 

\vspace{2mm}

\noindent
\underline{\bf{STEP B-1}}\ \ 
We reduce to the case when $A_i \not = 0$ and $(X_i, \Delta _i (c_i))$ is dlt. 

We may assume $a_i \le 1$ for any $i$. 
Take an extraction $\pi_i:X_i ' \to X_i$ of a divisor $E_i$ computing 
$\mld (X_i, \Delta _i(c_i)) = a_i$ (Theorem \ref{thm:extraction} and \ref{thm:dlt_modification}). 
Then we may write 
\[
K_{X_i '} + (1 - a_i)E_i + T_i + \Delta _i ' (c_i) = \pi _i ^* (K_{X_i} + \Delta _i (c_i)), 
\]
where $T_i$ is the sum of exceptional divisors (Note that $T_i = 0$ when $a_i > 0$) and 
$\Delta _i ' (t)$ is the strict transform of $\Delta _i (t)$. 
Then $\big( X_i ', (1 - a_i)E_i + T_i + \Delta _i ' (t) \big)$ satisfies the following conditions: 
\begin{itemize}
\item We may write $(1 - a_i)E_i + T_i + \Delta _i ' (t) = A' _i + B' _i (t)$ with all the conditions in 
Proposition \ref{prop:accum_A}. 
\item $\lfloor A_i' \rfloor = \lfloor A_i' + B_i'(c_i) \rfloor$ and $A' _i \not = 0$. 
\item $\big( X_i ', (1 - a_i)E_i + T_i + \Delta _i ' (c_i) \big)$ is dlt. 
\end{itemize}
Hence, we may replace $(X_i, \Delta _i (t))$ by 
$\big( X_i ', (1 - a_i)E_i + T_i + \Delta _i ' (t) \big)$. 

\vspace{2mm}

\noindent
\underline{\bf{STEP B-2}}\ \ We are done if there exists a component 
$S_i \subset \Supp \lfloor A_i \rfloor$
such that $(K_{X_i} + \Delta _i (c_i ')) |_{S_i} \not \equiv 0$. 

Suppose that there exists a component $S_i \subset \Supp \lfloor A_i \rfloor$
such that $(K_{X_i} + \Delta _i (c_i ')) |_{S_i} \not \equiv 0$. 
By adjunction, we can define $\Delta _{S_i} (t)$ as follows:
\[
(K_{X_i} + \Delta _i (t))|_{S_i} = K_{S_i} + \Delta _{S_i} (t). 
\]
Then $(S_i, \Delta _{S_i} (t))$ satisfies the following conditions: 
\begin{itemize}
\item $\dim S_i \le d-1$. 
\item $K_{S_i} + \Delta _{S_i} (c_i ') \not \equiv 0$ by the assumption. 
\item $(S_i, \Delta _{S_i} (t))$ satisfies the other conditions in Proposition \ref{prop:accum_A}. 
\end{itemize}
Hence, we may replace $(X_i, \Delta _i (t))$ by $(S_i, \Delta _{S_i} (t))$. 
By induction on $d$, it follows that $c \in \mfG_{d-2} (I) \subset \mfG_{d-1} (I)$. 

\vspace{2mm}

\noindent
\underline{\bf{STEP B-3}}\ \ 
We are done if 
$f_i : X_i \to Z_i$ is a Mori fiber space with $\dim Z_i > 0$, and 
$\Supp A_i$ dominates $Z_i$. 

Let $F_i$ be the general fiber of $f_i$. We may define $\Delta _{F_i} (t)$ as follows:
\[
(K_{X_i} + \Delta _i (t))|_{F_i} = K_{F_i} + \Delta _{F_i} (t). 
\]

Then $(F_i, \Delta _{F_i} (t))$ satisfies all conditions in Proposition \ref{prop:accum_A} 
except for $K_{F_i} + \Delta _{F_i} (c_i') \not \equiv 0$. 
Hence, if $K_{F_i} + \Delta _{F_i} (c_i') \not \equiv 0$ for some $c_i '$, then 
$c \in \mfG_{d-2} (I) \subset \mfG_{d-1} (I)$ by induction on $d$. 

Suppose that $K_{F_i} + \Delta _{F_i} (c_i') \equiv 0$, and so $K_{F_i} + \Delta _{F_i} (c) \equiv 0$. 
We may write $\Delta _{F_i} (t) = A' _i + B' _i (t)$ with the conditions in Proposition \ref{prop:accum_A}.  
Note that $(F_i, \Delta _{F_i} (c))$ is lc by the same reason as Claim \ref{claim:lc}. 
Since the coefficients of $B_i ' (c)$ satisfies the DCC (Lemma \ref{lem:coeff} (4)), 
it follows that $\lfloor A' _i \rfloor = A' _i$ by Theorem \ref{thm:num_triv}. 
Therefore, there exists a component $S_i \subset \Supp \lfloor A_i \rfloor$ such that $f(S_i) = Z_i$. 
Since $(K_{X_i} + \Delta _i (c))|_{F_i} \equiv 0$, 
$K_{X_i} + \Delta _i (c)$ is linearly equivalent 
to the pulled back of an $\mbR$-divisor $D_i$ on $Z_i$. 
As $K_{X_i} + \Delta _i (c) \not \equiv 0$, it follows that $D_i \not \equiv 0$, 
and so $(K_{X_i} + \Delta _i (c)) |_{S_i} \not \equiv 0$. 
Therefore we are done by STEP B-2. 

\vspace{2mm}

\noindent
\underline{\bf{STEP B-4}}\ \ 
We finish the case when $(X_i, \Delta _i (c_i))$ is not klt 
(equivalently $\lfloor A_i \rfloor \not= 0$ by STEP B-1). 

Suppose that $(X_i, \Delta _i (c_i))$ is not klt. 
Then $\lfloor A_i \rfloor \not= 0$. 
We run a $(K_{X_i} + \Delta _i (c_i) - \lfloor A_i \rfloor)$-MMP. 
Since $K_{X_i} + \Delta _i (c_i) - \lfloor A_i \rfloor \equiv - \lfloor A_i \rfloor$ is not pseudo-effective, 
a $(K_{X_i} + \Delta _i (c_i) - \lfloor A_i \rfloor)$-MMP terminates and 
ends with a Mori fiber space by \cite[Corollary 1.3.3]{BCHM}. 

Let $f_i : X_i \dashrightarrow X_i '$ be a step of the MMP. 
First suppose that $f_i$ is birational. 
We write 
\[
A_i' := f_{i*} A_i, \quad B_i ' (t) := f_{i*} B_i (t), \quad  \Delta' _i (t) = A_i' + B_i ' (t). 
\]
Then, $(X_i ', \Delta' _i (t))$ satisfies all conditions in Proposition \ref{prop:accum_A} except for 
$K_{X_i'} + \Delta' _i (c_i ') \not \equiv 0$. 

Assume that $K_{X_i'} + \Delta' _i (c_i ') \equiv 0$ (Hence, $f_i$ is a divisorial contraction). 
Set $D_i := K_{X_i} + \Delta _i (c_i') \not \equiv 0$. 
We may write $D_i - f_i ^* f_{i*} D_i = a E$, where $E$ is the $f_i$-exceptional divisor, and $a \in \mbR$. 
Since $D_i \not \equiv 0$ and $f_{i*} D_i \equiv 0$, we have $a E \not \equiv 0$. 
As $f_i$ is $\lfloor A_i \rfloor$-positive, there exists a component 
$T_i \subset \Supp \lfloor A_i \rfloor$ such that 
$E |_{T_i} \not \equiv 0$. Therefore, we are done by STEP B-2. 

Hence, we may assume $K_{X_i'} + \Delta' _i (c_i ') \not \equiv 0$, and 
replace $(X_i, \Delta_i (t))$ by $(X_i ', \Delta_i' (t))$ and continue the MMP. 
The MMP must terminate with a Mori fiber space $f_i : X_i \to Z_i$. 
If $\dim Z_i =0$, then the Picard number of $X_i$ is one. 
Therefore $(K_{X_i} + \Delta _i (c_i '))|_{T_i} \not \equiv 0$ for any component 
$T_i \subset \Supp \lfloor A_i \rfloor$, and we are done by STEP B-2. 
Suppose $\dim Z_i > 0$. 
Since $f_i$ is $\lfloor A_i \rfloor$-positive, 
$\lfloor A_i \rfloor$ dominates $Z_i$ and we are done by STEP B-3.


\vspace{2mm}

\noindent
\underline{\bf{STEP B-5}}\ \ 
In what follows, we assume that $(X_i, \Delta (c_i))$ is klt. 
We reduce to the case when $X_i$ has Picard number one. 

We run a $(K_{X_i} + B _i (c_i))$-MMP. 
Since $(K_{X_i} + B _i (c_i)) \equiv - A_i$ is not pseudo-effective, 
a $(K_{X_i} + B _i (c_i))$-MMP terminates and ends with a Mori fiber space 
by \cite[Corollary 1.3.3]{BCHM}. 

Let $f_i : X_i \dashrightarrow X_i '$ be a step of the MMP. 
First suppose that $f_i$ is birational. 
We write 
\[
A_i' := f_{i*} A_i, \quad B_i ' (t) := f_{i*} B_i (t), \quad  \Delta' _i (t) = A_i' + B_i ' (t). 
\]
Then, $(X_i ', \Delta' _i (t))$ satisfies all conditions in Proposition \ref{prop:accum_A} except for 
$K_{X_i'} + \Delta' _i (c_i ') \not \equiv 0$. 
We prove $K_{X_i'} + \Delta' _i (c_i ') \not \equiv 0$.

Suppose $K_{X_i'} + \Delta' _i (c_i ') \equiv 0$. 
It implies that $K_{X_i'} + \Delta' _i (c) \equiv 0$. 
Note that $(X_i ', \Delta' _i (c))$ is lc by Claim \ref{claim:lc}. 
Further the coefficients of $B_i'(c)$ satisfies the DCC (Lemma \ref{lem:coeff} (4)), 
and the coefficient of $A_i$ are approaching $1$ and $\lfloor A_i \rfloor = 0$. 
It contradicts Theorem \ref{thm:num_triv}. 

Since $K_{X_i'} + \Delta' _i (c_i ') \not \equiv 0$, 
we may replace $(X_i, \Delta _i (t))$ by $(X_i ', \Delta' _i (t))$ and continue the MMP. 
Then the MMP must terminate and ends with a Mori fiber space $X_i \to Z_i$. 
If $\dim Z_i = 0$, the Picard number of $X_i$ is one. 
Suppose $\dim Z_i > 0$. 
Since $f_i$ is $A_i$-positive, $\Supp A_i$ dominates $Z_i$, and we are done by STEP B-3. 

\vspace{2mm}

\noindent
\underline{\bf{STEP B-6}}\ \ 
We finish the case B. 

\begin{claim}\label{claim:ample}
We may assume that $K_{X_i} + A_i + B_i (c)$ is not ample for any $i$. 
\end{claim}
\begin{proof}
Assume that $K_{X_i} + A_i + B_i (c)$ is ample. 
We may write 
\[
A_i + B_i (t) = M_i + t(N_i ^+ - N_i ^-), 
\]
where $N_i ^+ \ge 0$ and $N_i ^- \ge 0$ have no common components. 
Further we may write 
\[
N_i ^+ \equiv n_i ^+ H_i, \quad N_i ^- \equiv n_i ^- H_i
\]
with some ample divisor $H_i$ and $n_i^+, n_i^- \in \mbQ _{\ge 0}$.

First suppose $c_i > c$. 
Then $K_{X_i} + A_i + B_i (c) \equiv (c-c_i)(N_i ^+ - N_i ^-)$ is ample, 
and so $n_i ^+ < n_i ^-$. 
Then we have
\[
K_{X_i} + M_i + cN_i ^+ - \bigg(c_i - (c_i -c)\frac{n_i ^+}{n_i ^-} \bigg)N_i ^-
\equiv
K_{X_i} + M_i + c_i(N_i ^+ - N_i ^-) \equiv 0. 
\]
Here, we have $c < c_i - (c_i -c)\frac{n_i ^+}{n_i ^-} < c_i$, 
and so
\[
0
\le
M_i + cN_i ^+ - \big(c_i - (c_i -c)\frac{n_i ^+}{n_i ^-} \big)N_i ^-
\le M_i + c(N_i ^+ - N_i ^-). 
\] 
Since $(X_i, M_i + c(N_i ^+ - N_i ^-))$ is lc by Claim \ref{claim:lc}, 
the new pair 
$\big( X_i, M_i + cN_i ^+ - \big(c_i - (c_i -c)\frac{n_i ^+}{n_i ^-} \big)N_i ^- \big)$ is also lc, 
but it contradicts Lemma \ref{lem:coeff} (4) and Theorem \ref{thm:num_triv}. 

Suppose $c_i < c$. 
Then we have $n_i ^+ > n_i ^-$, and
\[
K_{X_i} + M_i + \bigg(c_i + (c - c_i)\frac{n_i ^-}{n_i ^+} \bigg)N_i ^+ - cN_i ^-
\equiv
K_{X_i} + M_i + c_i(N_i ^+ - N_i ^-) \equiv 0. 
\]
Here, we have $c_i < c_i + (c - c_i)\frac{n_i ^-}{n_i ^+} < c$, 
and 
\[
0
\le
M_i + \bigg(c_i + (c - c_i)\frac{n_i ^-}{n_i ^+} \bigg)N_i ^+ - cN_i ^-
\le M_i + c(N_i ^+ - N_i ^-). 
\]
Note that the first inequality follows from Lemma \ref{lem:coeff} (3). 
Since $(X_i, M_i + c(N_i ^+ - N_i ^-))$ is lc by Claim \ref{claim:lc}, 
the new pair 
$\big( X_i, M_i + \big(c_i + (c - c_i)\frac{n_i ^-}{n_i ^+} \big)N_i ^+ - cN_i ^- \big)$ is also lc, 
but it contradicts Lemma \ref{lem:coeff} (4) and Theorem \ref{thm:num_triv}. 
\end{proof}

First suppose that $(X_i, \lceil A_i \rceil + B _i (c_i))$ is not lc. 
Note that $(X_i, \lceil A_i \rceil + B _i (c))$ is lc 
by Claim \ref{claim:lc}. 
Set 
\[
d_i :=  
\begin{cases}
\sup \{ t \in [c,c_i) \mid \text{$(X_i, \lceil A_i \rceil + B _i (t))$ is lc} \} 
& \text{when $c < c_i$, } \\
\inf \{ t \in (c_i, c] \mid \text{$(X_i, \lceil A_i \rceil + B _i (t))$ is lc} \} 
& \text{when $c_i < c$.}
\end{cases}
\]
Then $d_i \in \mfL _{d}(I) \subset \mfG _{d-1}(I)$, 
and $\lim d_i = \lim c_i = c$. 
Therefore we are done by induction on $d$. 

Thus we may assume that $(X_i, \lceil A_i \rceil + B _i (c_i))$ is lc. 
Set $e_i, f_i \in \mbR$ as 
\[
K_{X_i} + \lceil A_i \rceil + B _i (e_i) \equiv 0, \quad 
K_{X_i} + f_i \lceil A_i \rceil + B _i (c) \equiv 0. 
\]
Since $B_i (c_i) - B_i (c)$ is ample (Claim \ref{claim:ample}) and 
$K_{X_i} + A_i + B _i (c_i) \equiv 0$, there are only two cases:
\begin{itemize}
\item $e_i \ge c > c_i$ or $e_i \le c < c_i$, or
\item $c \ge e_i \ge c_i$ or $c \le e_i \le c_i$.
\end{itemize}

First suppose that $e_i \ge c > c_i$ or $e_i \le c < c_i$. 
Then $K_{X_i} + \lceil A_i \rceil + B _i (c)$ is ample, and so $f_i < 1$. 
Further, Since $K_{X_i} + A_i + B _i (c)$ is not ample, 
and the coefficients of $A_i$ are approaching one, 
it follows that $\lim f_i = 1$. 
Therefore, the set of coefficients of $f_i \lceil A_i \rceil + B _i (c)$
satisfies the DCC (Lemma \ref{lem:coeff} (4)), which contradicts Theorem \ref{thm:num_triv}. 

Next suppose that $c \ge e_i \ge c_i$ or $c \le e_i \le c_i$. 
Thus, we have $\lim e_i = \lim c_i = c$. 
Suppose $c \ge e_i \ge c_i$ (the other case can be proved in the same way), 
we may assume that $e_i < e_{i+1}$ for all $i$ or $e_i = c$ for some $i$. 
In the former case, as the sequence $e_i$ is accumulating to $c$, 
we may replace $(X_i, \Delta _i(t))$ by 
$(X_i, \lceil A_i \rceil + B _i (t))$. 
Remark that $(X_i, \lceil A_i \rceil + B _i (e_i))$ is lc, because both
$(X_i, \lceil A_i \rceil + B _i (c_i))$ and $(X_i, \lceil A_i \rceil + B _i (c))$ are lc. 
Note that the Picard number of $X_i$ is one. 
Hence for any component of $S_i \subset \Supp \lceil A_i \rceil$, 
we have $(K_{X_i} + \lceil A_i \rceil + B _i (c_i ')) |_{S_i} \not \equiv 0$ for some $c_i'$. 
Therefore we are done by STEP B-2. 
In the latter case, $c = e_i \in \mfG_{d-1}(I)$ by adjunction.

\vspace{3mm}

\noindent
\underline{\bf{Case A}}\ \ We treat the case when $a_i$ is bounded away from zero. 

In this case, it follows that $A_i = 0$ and $(X_i, B _i (c_i))$ is klt. 

\vspace{2mm}

\noindent
\underline{\bf{STEP A-1}}\ \ 
We reduce to the case when $X_i$ has Picard number one. 

Since $K_{X_i} + B _i (c_i) \equiv 0$ and $K_{X_i} + B _i (c_i ') \not \equiv 0$ for some $c_i'$, 
we can take $\epsilon \in \mbR$ such that $K_{X_i} + B _i (c_i + \epsilon)$ is klt 
(Lemma \ref{lem:coeff} (2)) 
and not pseudo-effective. 
We run a $(K_{X_i} + B _i (c_i + \epsilon))$-MMP. 
As $K_{X_i} + B _i (c_i + \epsilon)$ is not pseudo-effective, 
a $(K_{X_i} + B _i (c_i + \epsilon))$-MMP terminates and ends with 
a Mori fiber space \cite[Corollary 1.3.3]{BCHM}. 

Let $f_i : X_i \dashrightarrow X_i '$ be a step of the MMP. 
First suppose that $f_i$ is birational. 
We write 
\[
B_i ' (t) := f_{i*} B_i (t), \quad \Delta_i ' (t) := B_i ' (t). 
\]
Then, $(X_i ', \Delta' _i (t))$ satisfies all conditions in Proposition \ref{prop:accum_A} except for 
$K_{X_i'} + \Delta' _i (c_i ') \not \equiv 0$. 
We prove $K_{X_i'} + \Delta' _i (c_i ') \not \equiv 0$. 

Suppose $K_{X_i'} + \Delta' _i (c_i + \epsilon) \equiv 0$ (hence, $f_i$ is a divisorial contraction). 
We denote $D := K_{X_i} + \Delta _i (c_i + \epsilon)$, then we may write
\[
D \equiv D - f_i ^* f_{i*} D = a E, 
\]
where $E$ is the exceptional divisor and $a \in \mbR$. 
Since $D$ is not pseudo-effective, it follows that $a<0$. 
It contradicts the fact that $f_i$ is $D$-negative. 

Since $K_{X_i'} + \Delta' _i (c_i ') \not \equiv 0$, 
we may replace $(X_i, \Delta _i (t))$ by $(X_i ', \Delta _i ' (t))$, and 
continue the MMP. 
The MMP must terminate with a Mori fiber space $f_i : X_i \to Z_i$. 
If $\dim Z_i = 0$, then the Picard number of $X_i$ is one. 
Suppose $\dim Z_i > 0$. 
Let $F_i$ be the general fiber of $f_i$. Set $\Delta _{F_i}(t)$ by adjunction:
\[
(K_{X_i} + \Delta _i (t))|_{F_i} = K_{F_i} + \Delta _{F_i}(t). 
\]
Since $f_i$ is $(K_{X_i} + \Delta _i (c_i + \epsilon))$-negative, 
$(K_{X_i} + \Delta _i (c_i + \epsilon))|_{F_i} \not \equiv 0$. 
Then $(F_i, \Delta _{F_i}(t))$ satisfies the conditions in Proposition \ref{prop:accum_A}. 
Since $\dim F_i \le d-1$, we are done by induction on $d$. 

\vspace{2mm}

\noindent
\underline{\bf{STEP A-1'}}\ \ 
Since $X_i$ has Picard number one, 
by Lemma \ref{lem:picard1} and Lemma \ref{lem:coeff} (2), 
the number of components of $B_i(t)$ are bounded. 
Hence, possibly passing to a subsequence, we may assume that 
the number of components of $B_i(t)$ are fixed. 
Since $a_i$ are bounded away from zero, the coefficients of $B_i(t)$ 
have only finitely many possibilities (Lemma \ref{lem:coeff} (1)). 
Therefore, possibly passing to a subsequence, 
we may assume that the coefficients of $B_i (t)$ are fixed and of the form
\[
\frac{m-1+f+kt}{m}, 
\]
where $m \in \mbZ _{>0}$, $f \in I_+$, and $k \in \mbZ$. 
Here, $m$, $f$, and $k$ depend on the component but not on $i$. 

Set 
\begin{align*}
h_i ^+ &:= \sup \{ t \ge c_i \mid \text{$(X_i, B_i(t))$ is lc} \} \ge c_i, \\
h_i ^- &:= \inf \{ t \le c_i \mid \text{$(X_i, B_i(t))$ is lc} \} \le c_i. 
\end{align*}
Since $h_i ^+$ and $h_i ^-$ are bounded, possibly passing to a subsequence, 
we may assume that the limits $h^+ = \lim h_i ^+$, $h^- = \lim h_i ^-$ exist. 

\vspace{2mm}

\noindent
\underline{\bf{STEP A-2}}\ \ 
We finish the case when $c \ge h^+$ or $c \le h^-$. 

In this case, we have $h^+ = \lim h_i ^+ = c$ or $h^- = \lim h_i ^- = c$. 
Since $h_i ^+, h_i ^- \in \mfL _{d}(I) \subset \mfG _{d-1} (I)$, 
we are done by induction on $d$. 

\vspace{2mm}

\noindent
\underline{\bf{STEP A-3}}\ \ 
In what follows, we assume that $c < h^+$ and $c > h^-$. 
Let 
\[
d_i^+ = \frac{c_i + h_i^+}{2},\ \ 
d_i^- = \frac{c_i + h_i^-}{2},\ \ 
d^+ = \frac{c+h^+}{2}, \ \ d^- = \frac{c+h^-}{2}. 
\]
Then $d^+ > c$ and $d^- < c$. 
Further, we may assume $d^+ > c_i$ and $d^- < c_i$ possibly passing to a tail of the sequence. 
Note that the following hold: 
\begin{itemize}
\item If $c_i > c$, then $K_{X_i} + B_i (d^+)$ is ample. 
\item If $c_i < c$, then $K_{X_i} + B_i (d^-)$ is ample. 
\end{itemize}
This is because, $K_{X_i} + B_i (c)$ is not ample by the same reason as Claim \ref{claim:ample}. 

In this step, we prove that the following hold:
\begin{itemize}
\item If $c_i > c$, then $\vol(X_i, K_{X_i} + B_i (d^+))$ is unbounded. 
\item If $c_i < c$, then $\vol(X_i, K_{X_i} + B_i (d^-))$ is unbounded. 
\end{itemize}

Suppose that $c_i > c$ and 
$\vol (X_i, K_{X_i} + B_i (d^+))$ is bounded from above 
(the other case can be proved in the same way). 
Since the coefficients of $(X_i, B_i (d^+))$ are fixed, 
there exists $m \in \mbZ _{>0}$ such that 
$\phi _{m(K_{X_i} + B_i(d^+))}$ is birational for all $i$
by \cite[Theorem 1.3]{HMX2}. 
But then, by \cite[Lemma 2.4.2]{HMX}, 
$\{(X_i, B_i (d^+)) \mid i \in \mbZ _{>0}\}$ is log birationally bounded 
since $\vol(X_i, K_{X_i} + B_i (d^+))$ is bounded by the assumption. 
Note that the coefficients of $B_i (d^+ _i)$ are bounded from below and 
$\mld (X_i, B_i (d^+ _i))$ is also bounded from below:
\[
\mld (X_i, B_i (d^+ _i)) \ge \frac{\mld (X_i, B_i (h^+ _i)) + \mld (X_i, B_i (c_i))}{2}
= \frac{a_i}{2}. 
\]
Hence by \cite[Theorem 1.6]{HMX2}, 
$\{(X_i, B_i (d^+)) \mid i \in \mbZ _{>0}\}$ turns out to be a bounded family. 

Thus, we may take an ample Cartier divisor $H_i$ on $X_i$ such that 
\[
T_i \cdot H_i ^{\dim X_i -1}, \quad K_{X_i} \cdot H_i ^{\dim X_i -1}
\]
are bounded, where $T_i$ is any component of $B_i (t)$. 
Hence we may assume that these intersection numbers are independent of $i$ 
possibly passing to a subsequence. 
We may write $B_i (t) = M_i + t N_i$. 
As the coefficients of $B_i$ are independent of $i$, it follows that 
$M_i \cdot H_i^{\dim X_i -1}$ and $N_i \cdot H_i^{\dim X_i -1}$ are also constant. 
Since 
\[
0 = (K_{X_i} + B_i (c_i)) \cdot H_i ^{\dim X_i -1} = (K_{X_i} + M_i + c_i N_i) \cdot H_i ^{\dim X_i -1}, 
\]
it follows that $c_i$ is also constant, a contradiction. 
Remark that $N_i \cdot H_i ^{\dim X_i -1} \not = 0$ holds since $N_i \not \equiv 0$. 

\vspace{2mm}

\noindent
\underline{\bf{STEP A-4}}\ \ 
By STEP A-3, the following hold: 
\begin{itemize}
\item If $c_i > c$, then $K_{X_i} + B_i (d^+)$ is ample and $\vol(X_i, K_{X_i} + B_i (d^+))$ is unbounded.
\item If $c_i < c$, then $K_{X_i} + B_i (d^-)$ is ample and $\vol(X_i, K_{X_i} + B_i (d^-))$ is unbounded. 
\end{itemize}

Suppose $c_i > c$
(the other case can be proved in the same way). 
Note that $K_{X_i} + B_i (d^+) \equiv B_i (d^+) - B_i (c_i)$. 
Then, by Lemma 3.2.2 and Lemma 3.2.3 in \cite{HMX2}, 
possibly passing to a tail of the sequence, 
we may find $g_i < c_i$ and an $\mbR$-divisor $\Theta _i$ with the following conditions:
\begin{itemize}
\item $0 \le \Theta _i \sim _{\mbR} B_i(c_i) - B_i(g_i)$. 
\item $B_i (g_i) \ge 0$ (cf.\ Lemma \ref{lem:coeff} (3)). 
\item $\lim g_i = c$. 
\item $(X_i, B_i(g_i) + \Theta _i)$ has a unique non-klt place. 
\end{itemize}
Let $\phi : Y_i \to X_i$ be a dlt modification of $(X_i, B_i(g_i) + \Theta _i)$. 
Then we may write 
\[
K_{Y_i} + B_i' (g_i) + \Theta _i ' + S_i = \phi ^{*} (K_{X_i} + B_i(g_i) + \Theta _i), 
\]
where $S_i$ is the unique exceptional divisor, and 
$B_i' (t)$ and $\Theta _i '$ are the strict transform of $B_i (t)$ and $\Theta _i$. 
We may also write
\[
K_{Y_i} + B_i' (c_i) + s_i S_i =  \phi ^* (K_{X_i} + B_i (c_i))
\]
with $s_i < 1$ as $(X_i, B_i (c_i))$ is klt. 

\begin{claim}\label{claim:proportional}
We may assume that $S_i$ is ample and $K_{Y_i} + B_i' (l_i) + S_i \equiv 0$ for some $l_i \in [g_i, c_i)$. 
\end{claim}
First we assume this claim and finish the proof. 

Suppose that $(Y_i, B_i' (c_i) + S_i)$ is not lc. 
Note that $(Y_i, B_i' (g_i) + S_i)$ is lc. 
Set 
\[
k_i := \sup \{ t \in [g_i, c_i) \mid \text{$(Y_i, B_i' (t) + S_i)$ is lc}\}. 
\]
Then $k_i \in \mfL _d (I) \subset \mfG _{d-1}(I)$, and $\lim k_i = c$. 
Therefore we are done by induction on $d$. 

Thus, we may assume that $(Y_i, B_i' (c_i) + S_i)$ is lc. 
By adjunction, we can define $B_i '' (t)$ as follows:
\[
(K_{Y_i} + B_i' (t) + S_i) |_{S_i} = K_{S_i} + B_i '' (t). 
\]
Since $(Y_i, B_i' (c_i) + S_i)$ is lc, it follows that 
$B_i '' (t) \in \mcD _{c_i} (I)$ by Lemma \ref{lem:adjunction}. 
Further $(S_i, B_i'' (c_i))$ and $(S_i, B_i'' (g_i))$ are lc. 
By Claim \ref{claim:proportional}, it follows that 
$K_{S_i} + B_i'' (l_i) \equiv 0$ and $K_{S_i} + B_i'' (c_i) \not \equiv 0$. 
Therefore $l_i \in \mfG _{d-1} (I)$. 
Since $\lim l_i = c$, we are done by induction on $d$. 

\begin{proof}[Proof of Claim \ref{claim:proportional}]
We run a $(K_{Y_i} + B_i' (g_i) + \Theta _i ')$-MMP. 
Since $(Y_i, B_i' (g_i) + \Theta _i ')$ is klt and 
$K_{Y_i} + B_i' (g_i) + \Theta _i ' \equiv - S_i$ is not pseudo-effective, 
a $(K_{Y_i} + B_i' (g_i) + \Theta _i ')$-MMP $f_i : Y_i \dashrightarrow W_i$ terminates and ends with a Mori fiber space 
$\pi _i : W_i \to Z_i$
by \cite[Corollary 1.3.3]{BCHM}. 

Let $F_i$ be the general fiber of $\pi _i$ and 
let $B_i'''(t)$, $\Theta _i '''$ and $S_i '''$ be the restriction of 
$f_{i*}B_i'(t)$, $f_{i*}\Theta _i '$ and $f_{i*}S_i$ to $F_i$. 
Note that $S_i ''' \not = 0$ since every step of this MMP is $S_i$-positive. 
Further $B_i'''(t)$, $\Theta _i '''$ and $S_i '''$ are multiples of the same ample divisor. 
Therefore $S_i '''$ is ample. 
Since 
\[
K_{F_i} + B_i''' (g_i) + \Theta _i ''' + S_i''' \equiv 0, \ \text{and}\ \ \  
K_{F_i} + B_i''' (c_i) + s_i S_i''' \equiv 0, 
\]
we may find 
\[
K_{F_i} + B_i''' (l_i) + S_i ''' \equiv 0
\]
for some $l_i \in [g_i, c_i)$. 
Therefore, 
we can apply the same argument above after replacing $(Y_i, B_i'(t)+S_i)$ by $(F_i, B_i'''(t) + S_i ''')$. 
\end{proof}
\end{proof}

\begin{proof}[Proof of Corollary \ref{cor:rationality}]
Since $\mfG_{d} (I) \subset \Span _{\mbQ} (I \cup \{1\})$, 
the statement follows from Theorem \ref{thm:local_global} and Theorem \ref{thm:accum_mfG}. 
\end{proof}

\section{Perturbation of irrational coefficients of lc pairs}\label{section:perturbe}
The goal of this section is to prove Theorem \ref{thm:perturbe}. 
The ideal setting is treated as Theorem \ref{thm:perturbe2}. 

\begin{proof}[Proof of Theorem \ref{thm:perturbe}]
We may write the $\mbQ$-linear functions $s_i$ as 
\[
s_i (x_0, \ldots, x_{c'}) = \sum_{0 \le j \le c'} q_{ij}x_j
\]
with $q_{ij} \in \mbQ$. 
Since $s_i (r_0, \ldots , r_{c'}) \in \mbR _{\ge 0}$ and 
$r_0, \ldots , r_{c'}$ are $\mbQ$-linearly independent, 
we can take $t^-, t^+ \in \mbQ$ with the following conditions:
\begin{itemize}
\item $t^- < r_{c'} < t^+$, and 
\item $s_i (r_0, \ldots , r_{c'-1}, t) \in \mbR _{\ge 0}$ holds for any $t$ 
satisfying $t^- \le t \le t^+$. 
\end{itemize}

Suppose that the statement does not hold. 
Then there exist $\mbQ$-Gorenstein varieties $X^{(l)}\ (l \in \mbZ_{>0})$ of dimension $d$ and 
$\mbQ$-Cartier effective Weil divisors 
$D_{0}^{(l)}, \ldots , D_{c}^{(l)}$ on $X^{(l)}$ such that the following holds:
\begin{itemize}
\item $\big( X^{(l)}, \sum_{1 \le i \le c} s_i(r_0, \ldots , r_{c'}) D_i ^{(l)} \big)$ is lc, and
\item $\lim h^+ _l = r_{c'}$ or $\lim h^- _l = r_{c'}$, 
\end{itemize}
where we set
\begin{align*}
h^+ _l &:= \sup \big\{ t \ge r_{c'} \ \big| \  \text{$ \big(X^{(l)}, 
\sum_{1 \le i \le c} s_i(r_0, \ldots, r_{c'-1}, t) D_i ^{(l)} \big)$ is lc} \big\}, \\
h^- _l &:= \inf \big\{ t \le r_{c'} \ \big| \  \text{$(X^{(l)}, 
\sum_{1 \le i \le c} s_i(r_0, \ldots, r_{c'-1}, t) D_i ^{(l)} \big)$ is lc} \big\}. 
\end{align*}

Suppose that $\lim h^- _l = r_{c'}$ (the other case can be proved in the same way). 
We may assume that $t^- \le h^- _l \le r_{c'}$. 
Note that 
\begin{align*}
\sum_{1 \le i \le c} s_i(r_0, \ldots, r_{c'-1}, t) D_i ^{(l)}
=  \sum_{1 \le i \le c} s_i(r_0, \ldots, &r_{c'-1}, t^-) D_i ^{(l)} \\
&+ (t - t^-) \sum_{1 \le i \le c} q_{ic'}D_{i} ^{(l)}. 
\end{align*}
Let 
\[
I := \{ s_i(r_0, \ldots, r_{c'-1}, t^-) \mid 1 \le i \le c \}. 
\]
This becomes a finite set. 
Take $m \in \mbZ _{>0}$ such that $mq_{ic'} \in \mbZ$ holds for any $i$. 
Then $\frac{h_l ^- - t^-}{m} \in \mfL _{d} (I)$. 
Hence, by Corollary \ref{cor:rationality}, 
it follows that 
\[
\frac{r_{c'} - t^-}{m}\in \Span _{\mbQ} (I \cup \{ 1 \}) \subset \Span _{\mbQ} (r_0, \ldots, r_{c' -1}). 
\]
It contradicts the $\mbQ$-linearly independence of $r_0, \ldots , r_{c'}$. 
\end{proof}

The case of the pair with ideal sheaves can be also proved. 
\begin{thm}\label{thm:perturbe2}
Fix $d \in \mbZ _{>0}$. Let $r_1, \ldots, r_{c'}$ be positive real numbers and let $r_0 = 1$. 
Assume that $r_0, \ldots, r_{c'}$ are $\mbQ$-linearly independent. 
Let $s_1, \ldots, s_c : \mbR^{c'+1} \to \mbR$ be $\mbQ$-linear functions from $\mbR ^{c'+1}$ to $\mbR$. 
Assume that $s_i (r_0, \ldots , r_{c'}) \in \mbR _{\ge 0}$ for each $i$. 
Then there exists a positive real number $\epsilon >0$ with the following conditions:
\begin{itemize}
\item $s_i(r_0, \ldots, r_{c' -1}, t) \ge 0$ holds for any $t$ satisfying $|t - r_{c'}| \le \epsilon$. 
\item For any $\mbQ$-Gorenstein normal variety $X$ of dimension $d$ and any 
ideal sheaves $\mfa_1, \ldots, \mfa_c$ on $X$, 
if $(X, \prod _{1 \le i \le c} \mfa_i ^{s_i(r_0, \ldots, r_{c'}) })$ is lc, then 
$(X, \prod _{1 \le i \le c}  \mfa_i ^{s_i(r_0, \ldots, r_{c' -1}, t)})$ is also lc for any $t$ 
satisfying $|t - r_{c'}| \le \epsilon$. 
\end{itemize}
\end{thm}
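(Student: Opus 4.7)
The plan is to mirror the proof of Theorem \ref{thm:perturbe}, replacing $\mbQ$-Cartier effective Weil divisors by ideal sheaves throughout and invoking an ideal-sheaf version of Corollary \ref{cor:rationality}. Writing each linear form as $s_i(x_0, \ldots, x_{c'}) = \sum_j q_{ij} x_j$ with $q_{ij} \in \mbQ$, the $\mbQ$-linear independence of $r_0, \ldots, r_{c'}$ permits a choice of rational numbers $t^- < r_{c'} < t^+$ with $s_i(r_0, \ldots, r_{c'-1}, t) \ge 0$ for every $i$ and every $t \in [t^-, t^+]$; this already yields the first bullet. Assuming the second bullet fails, I would produce sequences $X^{(l)}$ of $\mbQ$-Gorenstein varieties of dimension $d$ and ideal sheaves $\mfa_1^{(l)}, \ldots, \mfa_c^{(l)}$ on $X^{(l)}$ such that $(X^{(l)}, \prod_i (\mfa_i^{(l)})^{s_i(r_0,\ldots,r_{c'})})$ is lc while the one-sided lc thresholds $h_l^\pm$, defined by varying only the last coordinate $t$, accumulate at $r_{c'}$. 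After passing to a subsequence I may assume $h_l^- \nearrow r_{c'}$.

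The main preparatory step is an ideal-sheaf analogue of Corollary \ref{cor:rationality}. I would replace linear functional divisors $\Delta(t) = \sum d_i(t) D_i$ throughout Section \ref{section:key} by linear functional $\mbR$-ideal sheaves $\mfA(t) = \prod \mfa_i^{d_i(t)}$ with $d_i(t) \in \mcD_c(I)$, and define the corresponding sets $\mfL_d^{\mathrm{id}}(I)$ and $\mfG_d^{\mathrm{id}}(I)$ by literally repeating Definitions \ref{defi:L} and \ref{defi:G}. The geometric tools used in the proofs of Theorem \ref{thm:local_global} and Theorem \ref{thm:accum_mfG}, namely dlt modification, divisorial extraction (Theorem \ref{thm:extraction}), adjunction, MMP, and the HMX2 theorems \ref{thm:lct} and \ref{thm:num_triv}, are all available for log triples $(X, \Delta, \mfa)$, so the proofs should carry over to yield: the accumulation points of $\mfL_d^{\mathrm{id}}(I)$ lie in $\Span_\mbQ(I \cup \{1\})$.

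Granted this extension, the remainder follows verbatim from the divisor case. Choose $m \in \mbZ_{>0}$ clearing every denominator of $q_{i,c'}$, set $I := \{ s_i(r_0, \ldots, r_{c'-1}, t^-) \mid 1 \le i \le c\}$, a finite set, and reparametrize by $\tau = (t - t^-)/m$, so that
\[
s_i(r_0, \ldots, r_{c'-1}, t) = s_i(r_0, \ldots, r_{c'-1}, t^-) + m\, q_{i,c'}\, \tau,
\]
whose right hand side fits the form required by Definition \ref{defi:D_c}. Then $(h_l^- - t^-)/m \in \mfL_d^{\mathrm{id}}(I)$ for each $l$, and since this set has $(r_{c'} - t^-)/m$ as an accumulation point, the extended Corollary \ref{cor:rationality} forces $(r_{c'} - t^-)/m \in \Span_\mbQ(I \cup \{1\}) \subset \Span_\mbQ(r_0, \ldots, r_{c'-1})$, contradicting the $\mbQ$-linear independence of $r_0, \ldots, r_{c'}$.

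The main obstacle is verifying that Section \ref{section:key} really does transplant to the ideal-sheaf setting without essential new content. The most delicate ingredient is the adjunction Lemma \ref{lem:adjunction}: one must check that, when restricting a triple $(X, \Delta(t), \mfA(t))$ to a component $S$ of $\lfloor \Delta(t) \rfloor$, the resulting coefficients on $S^{\mathrm{n}}$ still obey the structural conditions of Definition \ref{defi:D_c}. The remaining adaptations, namely pushforward of $\mfA(t)$ under MMP contractions and flips, restriction of $\mfA(t)$ to general Mori fibers, and compatibility with the extraction theorem, are more routine but require careful coefficient book-keeping, since the orders of vanishing of each $\mfa_i$ along the relevant centers interact non-trivially with the functional coefficients $d_i(t)$. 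Once these adaptations are recorded, the proof is a mechanical re-reading of Section \ref{section:key} with $\mfA(t)$ in place of $\Delta(t)$.
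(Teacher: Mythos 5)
Your plan diverges from the paper's argument at the very first step, and the divergence introduces a genuine gap. The paper does not re-derive any of the Section~\ref{section:key} machinery in the ideal-sheaf setting. Instead, it reduces Theorem~\ref{thm:perturbe2} directly to the already-proved divisor statement Theorem~\ref{thm:perturbe} via Lemma~\ref{lem:ideal}: for a suitable integer $l$, replace each ideal $\mfa_i$ by the Cartier divisor $D_i = \sum_{j=1}^{l}\divi(f_{ij})$ built from $l$ general elements of $\mfa_i$; then $(X,\prod_i \mfa_i^{r_i})$ is lc if and only if $(X, \tfrac{1}{l}\sum_i r_i D_i)$ is lc, as long as $r_i \le l$. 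Applying Theorem~\ref{thm:perturbe} to the linear forms $s_i/l$ and the divisors $D_i$, and then translating back via the same lemma, gives Theorem~\ref{thm:perturbe2} in a few lines.

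The gap in your approach is the assertion that Theorems~\ref{thm:local_global} and \ref{thm:accum_mfG} ``should carry over'' to linear functional $\mbR$-ideal sheaves. The machinery you would need is not available in that generality. Lemma~\ref{lem:adjunction} relies on the explicit different formula $\coeff_p \Diff_{S^{\mathrm n}}(0) = \frac{m-1}{m}$ from \cite[Proposition 16.6]{Kollars}, which is a statement about Weil divisor coefficients near a plt component; there is no analogous clean description of the coefficients produced by restricting $\prod\mfa_i^{d_i(t)}$ to a component of $\lfloor A_i\rfloor$, and the class $\mcD_c(I)$ is defined precisely to track how those Weil coefficients transform under this adjunction. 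Likewise, Theorems~\ref{thm:lct} and \ref{thm:num_triv} are stated for pairs $(X,\Delta)$ with $\Delta$ a boundary $\mbR$-divisor, not for log triples; invoking them inside a purported proof of an ideal-sheaf analogue of Theorem~\ref{thm:accum_mfG} requires exactly the kind of ideal-to-divisor reduction that Lemma~\ref{lem:ideal} provides, so you would be going in a circle. The Mori fiber space and MMP steps also need the boundary to be a divisor (pushforward of a divisor along a birational contraction is well-behaved, whereas pushforward of a general coherent ideal is not, and restriction of $\ord_E\mfa$ to general fibers has no simple functoriality). In short: the divisorial structure of Section~\ref{section:key} is essential, not cosmetic, and the correct move is the one the paper makes --- convert ideals to divisors once via general elements, not re-run the accumulation-point argument for ideals.

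Your handling of the first bullet, and the final reparametrization and contradiction with $\mbQ$-linear independence, are fine and match the proof of Theorem~\ref{thm:perturbe}; the issue is only the missing ideal-to-divisor reduction.
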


This theorem follows from Theorem \ref{thm:perturbe} by the following lemma (cf.\ \cite[Proposition 9.2.28]{Laz2}). 

\begin{lem}\label{lem:ideal}
Fix $l \in \mbZ _{>0}$. 
Let $X$ be a $\mbQ$-Gorenstein normal affine variety, and 
let $\mfa _1 , \ldots , \mfa _c$ be ideal sheaves on $X$. 
Fix general elements 
$
f_{i1}, \ldots , f_{il} \in \mfa _i
$
for each $i$, and let $D_{ij} = \divi (f_{ij}) \ge 0$ be the corresponding Cartier divisors. 
Set $D_i := \sum _{1 \le j \le l} D_{ij}$. 

Then the following holds for any positive real numbers $r_1, \ldots, r_c \le l$ at most $l$: 
the pair $(X, \prod _{1 \le i \le c} \mfa _i ^{r_i})$ is lc if and only if 
the pair $(X, \frac{1}{l}\sum _{1 \le i \le c} r_i D _i )$ is lc. 
\end{lem}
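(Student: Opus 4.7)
The plan is to prove the two implications separately. The direction that requires no generality of the $f_{ij}$ is that lc of the divisor pair implies lc of the ideal pair. First I would observe that for any prime divisor $E$ over $X$ and any $j$,
\[
\ord_E \mfa_i \le \ord_E f_{ij},
\]
so averaging over $j = 1,\ldots,l$ gives $\ord_E \mfa_i \le \tfrac{1}{l}\ord_E D_i$. Since $r_i > 0$, this yields $\ord_E \prod \mfa_i^{r_i} \le \ord_E\bigl(\tfrac{1}{l}\sum r_i D_i\bigr)$, and hence $a_E(X, \prod \mfa_i^{r_i}) \ge a_E\bigl(X, \tfrac{1}{l}\sum r_i D_i\bigr)$ for every $E$. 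Thus lc of the divisor pair forces lc of the ideal pair.

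For the converse I would take a log resolution $\pi : Y \to X$ of $\mfa_1 \cdots \mfa_c$, writing $\mfa_i \mcO_Y = \mcO_Y(-F_i)$ with $\sum_i F_i + \mathrm{Exc}(\pi)$ snc. Since each $\mcO_Y(-F_i)$ is globally generated by the pullbacks of elements of $\mfa_i$, Bertini's theorem produces, for general $f_{ij} \in \mfa_i$, a decomposition $\pi^*\divi f_{ij} = F_i + G_{ij}$ whose movable part $G_{ij}$ is smooth, non-exceptional for $\pi$, disjoint from $\mathrm{Exc}(\pi)$ and from every component of $\sum_k F_k$, and transverse to the other $G_{i'j'}$. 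Therefore $Y$ is simultaneously a log resolution of $\bigl(X, \tfrac{1}{l}\sum r_i D_i\bigr)$, and log canonicity of this pair can be verified on the prime divisors of $Y$.

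I would then read off log discrepancies on each type of component. For $E$ a component of $\mathrm{Exc}(\pi)$ or of some $F_k$, generality of the $f_{i'j'}$ forces $\coeff_E G_{i'j'} = 0$, so
\[
a_E\bigl(X, \tfrac{1}{l}\textstyle\sum r_i D_i\bigr) = 1 + \coeff_E(K_Y - \pi^* K_X) - \sum_i r_i \coeff_E F_i = a_E\bigl(X, \textstyle\prod \mfa_i^{r_i}\bigr) \ge 0
\]
by hypothesis. For a new component $E = G_{ij}$, generality and non-exceptionality give $\coeff_E(K_Y - \pi^* K_X) = 0$, $\coeff_E F_k = 0$ for all $k$, and $\coeff_E G_{i'j'} = \delta_{(i,j),(i',j')}$, hence
\[
a_{G_{ij}}\bigl(X, \tfrac{1}{l}\textstyle\sum r_k D_k\bigr) = 1 - \tfrac{r_i}{l} \ge 0,
\]
which is precisely where the hypothesis $r_i \le l$ enters. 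This exhausts the relevant divisors on $Y$ and gives the claimed lc.

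The main technical point is the Bertini step: the divisors $G_{ij}$ must simultaneously avoid the components of $\sum_k F_k + \mathrm{Exc}(\pi)$ and be mutually transverse, rather than merely be general one at a time. In characteristic zero this is standard once one observes that, after factoring out $F_i$, the residual linear system $|\mfa_i \cdot \mcO_Y(F_i)|$ on $Y$ is base-point-free, so one can apply Bertini to the $G_{ij}$ inductively and simultaneously (cf.\ \cite[Proposition 9.2.28]{Laz2}). This is the only place where the word ``general'' in the statement is essential.
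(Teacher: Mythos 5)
Your proof is correct and follows essentially the same route as the paper: the paper reformulates $(X, \tfrac1l\sum r_i D_i)$ as $(X, \prod\mfb_i^{r_i/l})$ with $\mfb_i = (\prod_j f_{ij})$, deduces the easy direction from $\mfb_i \subset \mfa_i^l$, and for the converse takes a log resolution of $(X,\prod\mfa_i^{r_i})$ on which $\mfb_i\mcO_Y = \mfc_i\mcO_Y(-lE_i)$ with $\mfc_i$ generated by products of general elements of $\mcO_Y$, which is exactly your decomposition $\pi^*\divi f_{ij} = F_i + G_{ij}$ phrased in ideal-theoretic language. One small imprecision in your Bertini step: a general $G_{ij}$ need not be \emph{disjoint} from $\mathrm{Exc}(\pi)$ or $\Supp F_k$ (it will typically meet them transversally), only that it contains no component of them; but your subsequent log discrepancy computations use only this weaker statement, so the argument goes through.
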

\begin{defi}\label{def:general}
Let $X$ be an affine variety and $\mfa$ an ideal sheaf. 
Fix generators $g_1, \ldots , g_c \in \mfa$. 
Then, a \textit{general element} of $\mfa$ is a general $\mbC$-linear combination of $g_i$. 
\end{defi}
\begin{proof}[Proof of Lemma \ref{lem:ideal}]
Let $\mfb _i$ be an ideal sheaf generated by $\prod _{1 \le j \le l} f_{ij}$. 
Then the pair $(X, \frac{1}{l}\sum _{1 \le i \le c} r_i D _i )$ is corresponding to 
the pair $(X, \prod _{1 \le i \le c} \mfb _i ^{r_i/l})$.

Since $\mfb_i \subset \mfa _i ^l$, it easily follows that the log canonicity of 
$(X, \prod _{1 \le i \le c} \mfb _i ^{r_i/l})$ implies 
the log canonicity of $(X, \prod _{1 \le i \le c} \mfa _i ^{r_i})$.

Suppose that $(X, \prod _{1 \le i \le c} \mfa _i ^{r_i})$ is lc. 
Let $Y \to X$ be a log resolution of $(X, \prod _{1 \le i \le c} \mfa _i ^{r_i})$. 
Then we may write $\mfa _i \mcO _Y = \mcO_Y (- E_i)$ with some Cartier divisor $E_i$. 
Since $\mfb_i \subset \mfa _i ^l$, we may write 
$\mfb_i \mcO _Y = \mfc_i \mcO _Y (- l E_i)$
with some ideal sheaf $\mfc_i \subset \mcO _Y$. 
Let $e_i$ be a local generator of $\mcO _Y (- E_i)$. 
Then $\mfc_i$ is generated by $\prod _{1 \le j \le l} g_{ij}$, 
where we set $g_{ij} := f_{ij} e_{i} ^{-1} \in \mcO _Y$. 
As $f_{i1}, \ldots, f_{il}$ are general elements of $\mfa _i$, 
the elements $g_{i1}, \ldots , g_{il}$ become general elements of $\mcO _Y$. 
Therefore $Y \to X$ is also a log resolution of $(X, \prod _{1 \le i \le c} \mfb _i ^{r_i/l})$. 
Since $\ord _{g_{ij}} \mfb _i^{r_i /l} = r_i /l \le 1$, 
it follows that $(X, \prod _{1 \le i \le c} \mfb _i ^{r_i/l})$ is also lc. 
\end{proof}

\section{Proof of main theorem and corollaries}\label{section:main}
Theorem \ref{thm:main} can be proved by the induction on $\dim _{\mbQ} \Span_{\mbQ} (I \cup \{ 1 \})$. 
The same argument essentially appears in \cite{Kawakita:discrete}. 
\begin{proof}[Proof of Theorem \ref{thm:main}]
It is sufficient to prove the case when $1 \in I$. 
Let $r_0 = 1, r_1 , \ldots , r_c$ be all the elements of $I$. 
Set $c' + 1  := \dim _{\mbQ} \Span_{\mbQ} (1, r_1, \ldots , r_c)$. 
Possibly rearranging the indices, 
we may assume that $r_0 , \ldots , r_{c'}$ are $\mbQ$-linearly independent. 
We may write $r_i = \sum_{0 \le j \le c'} q_{ij}r_j$ with $q_{ij} \in \mbQ$. 

We prove by induction on $c'$. 
If $c' = 0$, we can take $n \in \mbZ _{>0}$ such that 
$I \subset \frac{1}{n} \mbZ$ and $\frac{1}{r} \in \frac{1}{n} \mbZ$. 
Then $B(d,r,I) \subset \frac{1}{n} \mbZ$ and $B(d,r,I)$ turns out to be discrete. 

Set $\mbQ$-linear functions $s_0, \ldots , s_c$ as follows: 
\[s_i : \mbR ^{c'+1} \to \mbR; \quad s_i (x_0, \ldots , x_{c'}) = \sum _{0 \le j \le c'} q_{ij}x_j. \]
Take $\epsilon > 0$ as in Theorem \ref{thm:perturbe2}. 
We fix $t^+, t^- \in \mbQ$ such that 
\[
t^+ \in (r_{c'}, r_{c'} + \epsilon] \cap \mbQ, \quad 
t^- \in [r_{c'} - \epsilon, r_{c'}) \cap \mbQ. 
\]
We define $r^+ _0, \ldots, r^+ _c$ and $r^- _0, \ldots, r^- _c$ as
\[
r_i ^+ = s_i (r_0, \ldots, r_{c' -1}, t^+), \quad 
r_i ^- = s_i (r_0, \ldots, r_{c' -1}, t^-). 
\]
Further, we set $I' := \{ r^+ _0, \ldots , r^+ _c, r^- _0, \ldots , r^- _c \}$. 
Then $\dim _{\mbQ} \Span _{\mbQ} (I') = c'$, 
and so $B(d,r,I')$ is discrete by induction.

Let $(X, \prod _{0 \le i \le c} \mfa _i ^{r_i} ) \in P(d,r)$, and let $E$ be a divisor over $X$. 
Since $(X, \prod _{0 \le i \le c}  \mfa_i ^{r _i})$ is lc, 
$(X, \prod _{0 \le i \le c}  \mfa_i ^{r^* _i})$ is also lc for each $* \in \{ +, - \}$. 
Hence we have 
\begin{align*}
0 &\le a_E (X, \prod _{0 \le i \le c}  \mfa_i ^{r^* _i}) \\ 
&= a_E (X, \prod _{0 \le i \le c}  \mfa_i ^{r_i}) - 
(r^* _{c'} - r_{c'}) \sum _{0 \le i \le c} q_{ic'}\ord _E \mfa_i. 
\end{align*}
Therefore, either of the following holds:
\begin{itemize}
\item $0 \le \sum _{0 \le i \le c} q_{ic'}\ord _E \mfa_i \le 
\epsilon _+ ^{-1} a_E (X, \prod _{0 \le i \le c} \mfa_i ^{r_i}) $, or
\item $- \epsilon _- ^{-1} a_E (X, \prod _{0 \le i \le c} \mfa_i ^{r_i}) \le 
\sum _{0 \le i \le c} q_{ic'}\ord _E \mfa_i \le 0$, 
\end{itemize}
where we set $\epsilon _+ := r^+ _{c'} - r_{c'}$ and $\epsilon _- := r_{c'} - r^- _{c'}$.

It is sufficient to show the discreteness of $B(d,r,I) \cap [0,a]$ for any $a \in \mbR _{>0}$. 
Take $n \in \mbZ _{>0}$ such that $q_{ic'} \in \frac{1}{n} \mbZ$ holds for any $i$. 
Then, it is sufficient to prove that $B(d,r,I) \cap [0,a]$ is contained in 
\begin{align*}
\big\{ b + \epsilon_+ e  \ & \big | \  b \in B(d,r,I'), 
e \in \frac{1}{n}\mbZ \cap[0, \epsilon _+ ^{-1} a]  \big\} \\
&\cup 
\big\{ b - \epsilon_- e  \ \big | \  b \in B(d,r,I'), 
e \in \frac{1}{n}\mbZ \cap[- \epsilon _- ^{-1} a, 0]  \big\}. 
\end{align*}
In fact, this set becomes discrete because $B(d,r,I')$ is discrete, and both
$\frac{1}{n}\mbZ \cap[0, \epsilon _+ ^{-1} a]$ and $\frac{1}{n}\mbZ \cap[- \epsilon _- ^{-1} a, 0]$ 
are finite. 

Let $(X, \prod _{0 \le i \le c} \mfa_i ^{r_i}) \in P(d,t)$, and $E$ a divisor over $X$. 
Assume that $a_E(X, \prod _{0 \le i \le c} \mfa_i ^{r_i}) \in [0,a]$ holds. 
Further, suppose $\sum _{0 \le i \le c} q_{ic'}\ord _E \mfa_i \ge 0$ 
(the other case can be proved in the same way). 
Then, we have 
\begin{align*}
a_E (X, \prod _{0 \le i \le c} \mfa_i ^{r_i})
=
a_E (X, \prod _{0 \le i \le c} \mfa_i ^{r^+ _i})
+
(r^+ _{c'} -r_{c'})\sum _{0 \le i \le c} q_{ic'}\ord _E \mfa_i. 
\end{align*}
Here, we have 
\begin{itemize}
\item $a_E (X, \prod _{0 \le i \le c} \mfa_i ^{r^+ _i}) \in B(d,r,I')$, 
\item $r^+ _{c'} -r_{c'} = \epsilon _+$, and 
\item $\sum _{0 \le i \le c} q_{ic'}\ord _E \mfa_i \in \frac{1}{n}\mbZ 
\cap[0, \epsilon _+ ^{-1} a]$. 
\end{itemize}
We complete the proof. 
\end{proof}

\begin{proof}[Proof of Corollary \ref{cor:acc_3dimcan}]
Note that $A_{\text{can}}(3,I) \subset [1,3]$ holds (cf.\ \cite{Kawamata}, \cite{Mark}). 
We prove that for any $a > 1$, the set
\[
A_{\text{can}}(3,I) \cap [a, + \infty)
\]
is a finite set. 

By the classification of three-dimensional $\mbQ$-factorial terminal singularities 
(see \cite{Kawamata}, \cite{Mark}), 
the minimal log discrepancy of a three-dimensional terminal singularity is equal to $1+1/r\ (r \in \mbZ_{>0})$ or $3$. 
In the case when $\mld _x (X)=3$, the Gorenstein index of $X$ at $x$ is $1$. 
If $\mld _x (X)=1+1/r$, the Gorenstein index of $X$ at $x$ is $r$. 
Further, by \cite[Corollary 5.2]{Kawamata:crepant}, if $X$ has Gorenstein index $r$ at $x \in X$, then 
$rD$ is Cartier at $x$ for any Weil divisor $D$. 

Let $(X, \Delta)$ be a three-dimensional canonical pair satisfying $\Delta \in I$ and $\mld _x(X, \Delta) \ge a$. 
By \cite[Corollary 1.4.3]{BCHM}, 
there exists a projective morphism $f : Y \to X$ with the following properties: 
\begin{itemize}
\item $Y$ is a $\mbQ$-factorial terminal variety. 
\item $f^* (K_X + \Delta) = K_Y + \Delta _Y$ holds, where $\Delta _Y$ is the strict transform 
on $Y$ of $\Delta$ (note that $(X, \Delta)$ is canonical). 
\end{itemize}
Take a divisor $E$ over $X$ such that 
$\mld _x(X, \Delta) = a_E (X, \Delta)$ and $\cent _X (E) = \{x \}$.

Suppose $\dim \cent _Y (E) = 0$. 
Then $\mld _x(X, \Delta) = \mld _y (Y, \Delta _Y)$ holds, where $\{ y \} := \cent _Y (E)$. 
Since $\mld _y (Y) \ge \mld _y (Y, \Delta _Y) \ge a$ holds, 
the Gorenstein index of $Y$ at $y$ is at most $\lfloor \frac{1}{a-1} \rfloor$. 
Let $l$ be the Gorenstein index of $Y$ at $y$. 
Since $lD$ is Cartier at $y$ for any Weil divisor $D$ on $Y$, 
it follows that $\mld _y (Y, \Delta _Y) \in A'(3,l, \frac{1}{l}I)$ (see Remark \ref{rmk:cartier}), where 
we set 
\[
\frac{1}{l}I := \{ fl^{-1} \mid f \in I \}. 
\]
Therefore we have 
\[
\mld _x (X, \Delta) \in \bigcup _{l \le \lfloor \frac{1}{a-1} \rfloor} A' \big( 3,l, \frac{1}{l}I \big), 
\]
and the right hand side is a finite set by Corollary \ref{cor:main}.

Suppose $\dim \cent _Y (E) = 1$. 
Then, by \cite[Proposition 2.1]{Ambro:mld}, 
\[
\mld _y (Y, \Delta _Y) = 1 + \mld _x(X, \Delta)
\]
holds for some $y \in \cent _Y (E)$. 
Since $\mld _y (Y) \ge 1 + a > 2$, it follows that $Y$ has Gorenstein index $1$. 
Hence, 
\[
\mld _y (Y, \Delta _Y) \in A'(3,1,I). 
\]
Therefore, we have
\[
\mld _x (X, \Delta) \in -1 + A'(3,1,I), 
\]
and the right hand side is a finite set by Corollary \ref{cor:main}. 

Suppose $\dim \cent _Y (E) = 2$. 
Then $E$ is a divisor on $Y$, and we have
\[
\mld _x (X, \Delta) = 1 - \coeff _E \Delta _Y. 
\]
Therefore, we have 
\[
\mld _x (X, \Delta) \in 1 - I, 
\]
and the right hand side is a finite set. 
\end{proof}

\section*{Acknowledgments}
The author expresses his gratitude to his advisor Professor Yujiro Kawamata 
for his encouragement and valuable advice. 
He is grateful to 
Professors Yoshinori Gongyo, Masayuki Kawakita, Shunsuke Takagi, and Hiromu Tanaka for useful comments and suggestions.
He is supported by the Grant-in-Aid for Scientific Research
(KAKENHI No.\ 25-3003) 
and the Program for Leading Graduate Schools, MEXT, Japan.

\begin{bibdiv}
 \begin{biblist*}

\bib{Alexeev}{article}{
   author={Alexeev, Valery},
   title={Two two-dimensional terminations},
   journal={Duke Math. J.},
   volume={69},
   date={1993},
   number={3},
   pages={527--545},
}



\bib{Ambro:mld}{article}{
   author={Ambro, Florin},
   title={On minimal log discrepancies},
   journal={Math. Res. Lett.},
   volume={6},
   date={1999},
   number={5-6},
   pages={573--580},
}

\bib{Ambro:toric}{article}{
   author={Ambro, Florin},
   title={The set of toric minimal log discrepancies},
   journal={Cent. Eur. J. Math.},
   volume={4},
   date={2006},
   number={3},
   pages={358--370 (electronic)},
}

\bib{BCHM}{article}{
   author={Birkar, Caucher},
   author={Cascini, Paolo},
   author={Hacon, Christopher D.},
   author={McKernan, James},
   title={Existence of minimal models for varieties of log general type},
   journal={J. Amer. Math. Soc.},
   volume={23},
   date={2010},
   number={2},
   pages={405--468},
}

\bib{dFM:limit}{article}{
   author={de Fernex, Tommaso},
   author={Musta{\c{t}}{\u{a}}, Mircea},
   title={Limits of log canonical thresholds},
   journal={Ann. Sci. \'Ec. Norm. Sup\'er. (4)},
   volume={42},
   date={2009},
   number={3},
   pages={491--515},
}



\bib{Fujino:fundamental}{article}{
   author={Fujino, Osamu},
   title={Fundamental theorems for the log minimal model program},
   journal={Publ. Res. Inst. Math. Sci.},
   volume={47},
   date={2011},
   number={3},
   pages={727--789},
}

\bib{HMX}{article}{
   author={Hacon, Christopher D.},
   author={McKernan, James},
   author={Xu, Chenyang},
   title={On the birational automorphisms of varieties of general type},
   journal={Ann. of Math. (2)},
   volume={177},
   date={2013},
   number={3},
   pages={1077--1111},
}


\bib{HMX2}{article}{
   author={Hacon, Christopher D.},
   author={McKernan, James},
   author={Xu, Chenyang},
   title={ACC for log canonical thresholds},
   journal={Ann. of Math. (2)},
   volume={180},
   date={2014},
   number={2},
   pages={523--571},
}

\bib{Kawakita:BDD}{article}{
   author={Kawakita, Masayuki},
   title={Towards boundedness of minimal log discrepancies by the
   Riemann-Roch theorem},
   journal={Amer. J. Math.},
   volume={133},
   date={2011},
   number={5},
   pages={1299--1311},
}

\bib{Kawakita:discrete}{article}{
   author={Kawakita, Masayuki},
   title={Discreteness of log discrepancies over log canonical triples on a fixed pair},
   eprint={arXiv:1204.5248v1}
}


\bib{Kawakita:connectedness}{article}{
   author={Kawakita, Masayuki},
   title={A connectedness theorem over the spectrum of a formal power series ring},
   eprint={arXiv:1403.7582v1}
}

\bib{Kawamata:crepant}{article}{
   author={Kawamata, Yujiro},
   title={Crepant blowing-up of $3$-dimensional canonical singularities and
   its application to degenerations of surfaces},
   journal={Ann. of Math. (2)},
   volume={127},
   date={1988},
   number={1},
   pages={93--163},
}

\bib{Kawamata}{article}{
   author={Kawamata, Yujiro},
   title={The minimal discrepancy coefficients of terminal singularities in dimension 3},
   journal={Appendix to V. V. Shokurov, 
   \textit{Three-dimensional log perestroikas}, Izv. Ross. Akad. Nauk Ser. Mat.},
   volume={56},
   date={1992},
   number={1},
   pages={105--203},
   issn={0373-2436},
}

\bib{Kollar:which}{article}{
   author={Koll\'ar, J\'anos},
   title={Which powers of holomorphic functions are integrable?},
   eprint={arXiv:0805.0756v1}
}

\bib{KM}{book}{
   author={Koll{\'a}r, J{\'a}nos},
   author={Mori, Shigefumi},
   title={Birational geometry of algebraic varieties},
   series={Cambridge Tracts in Mathematics},
   volume={134},
   publisher={Cambridge University Press},
   date={1998},
}

\bib{Kollars}{collection}{
   author={Koll{\'a}r, J{\'a}nos, \ et al.}, 
   title={Flips and abundance for algebraic threefolds},
   note={Papers from the Second Summer Seminar on Algebraic Geometry held at
   the University of Utah, Salt Lake City, Utah, August 1991;
   Ast\'erisque No. 211 (1992)},
   publisher={Soci\'et\'e Math\'ematique de France, Paris},
   date={1992},
   pages={1--258},
}

\bib{Laz2}{book}{
   author={Lazarsfeld, Robert},
   title={Positivity in algebraic geometry. II},
   series={Ergebnisse der Mathematik und ihrer Grenzgebiete. 3. Folge. A
   Series of Modern Surveys in Mathematics 
   },
   volume={49},
   publisher={Springer-Verlag, Berlin},
   date={2004},
   pages={xviii+385},
   isbn={3-540-22534-X},
}

\bib{Mark}{article}{
   author={Markushevich, Dimitri},
   title={Minimal discrepancy for a terminal cDV singularity is $1$},
   journal={J. Math. Sci. Univ. Tokyo},
   volume={3},
   date={1996},
   number={2},
   pages={445--456},
}

\bib{MP}{article}{
   author={McKernan, James},
   author={Prokhorov, Yuri},
   title={Threefold thresholds},
   journal={Manuscripta Math.},
   volume={114},
   date={2004},
   number={3},
   pages={281--304},
}


\bib{Shokurov:acc}{article}{
   author={Shokurov, V. V.},
   title={A.c.c. in codimension 2},
   date={1993, preprint},
}

\bib{Shokurov:models}{article}{
   author={Shokurov, V. V.},
   title={$3$-fold log models},
   note={Algebraic geometry, 4},
   journal={J. Math. Sci.},
   volume={81},
   date={1996},
   number={3},
   pages={2667--2699},
}

\bib{Shokurov:letter}{article}{
   author={Shokurov, V. V.},
   title={Letters of a bi-rationalist. V. Minimal log discrepancies and
   termination of log flips},
   journal={Tr. Mat. Inst. Steklova},
   volume={246},
   date={2004},
   number={Algebr. Geom. Metody, Svyazi i Prilozh.},
   pages={328--351},
   translation={
      journal={Proc. Steklov Inst. Math.},
      date={2004},
      number={3 (246)},
      pages={315--336},
   },
}



\end{biblist*}
\end{bibdiv}

\end{document}